\pgfplotsset{compat=1.14}
\newtheoremstyle{it_dotless} 
                        {0.5em}   
                        {0.5em}   
                        {\itshape}  
                        {}          
                        {\bfseries} 
                        {:}         
                        {\newline}  
                        {}			
\newtheoremstyle{dotless} 
                        {0.5em}   
                        {0.5em}   
                        {}  		
                        {}          
                        {\bfseries} 
                        {:}         
                        {\newline}  
                        {}			
\theoremstyle{it_dotless}
\newtheorem{theorem}{Theorem}
\newtheorem{lemma}[theorem]{Lemma}
\newtheorem{corollary}[theorem]{Corollary}
\theoremstyle{dotless}
\newtheorem{remark}[theorem]{Remark}
\newtheorem{example}[theorem]{Example}
\newtheorem{definition}[theorem]{Definition}
\newcommandx{\rewrite}[2][1=]{\todo[linecolor=red,backgroundcolor=red!25,bordercolor=red,#1]{#2}}
\newcommandx{\improve}[2][1=]{\todo[linecolor=yellow,backgroundcolor=yellow!25,bordercolor=yellow,#1]{#2}}
\newcommandx{\recheck}[2][1=]{\todo[linecolor=green,backgroundcolor=green!25,bordercolor=green,#1]{#2}}
\newcommand{\bitem}{\begin{itemize}}
\newcommand{\eitem}{\end{itemize}}
\newcommand{\mc}[1]{\mathcal{#1}}
\newcommand{\N}{\mathbb{N}}
\newcommand{\R}{\mathbb{R}}
\newcommand{\bpm}{\begin{pmatrix}}
\newcommand{\epm}{\end{pmatrix}}
\newcommand{\bsm}{\left(\begin{smallmatrix}}
\newcommand{\esm}{\end{smallmatrix}\right)}
\newcommand{\T}{\top}
\newcommand{\la}{\langle}
\newcommand{\ra}{\rangle}
\DeclareMathOperator{\tr}{tr}
\DeclareMathOperator{\Diag}{Diag}
\DeclareMathOperator{\diag}{diag}
\DeclareMathOperator{\supp}{supp}
\DeclareMathOperator{\conv}{conv}
\DeclareMathOperator{\rank}{rank}
\newcommand{\ifnonempty}[1]{\ifthenelse{\equal{#1}{\empty}}{}{(#1)}}
\newcommand{\vect}[1]{\bm{#1}}
\newcommand{\matr}[1]{\mathbf{#1}}
\newcommand{\I}{\;\middle | \;}
\newcommand{\e}[1][]{\vect{e}_{#1}}
\newcommand{\graphs}[1]{\mc{G}_{#1}}					                
\newcommand{\Pot}[2][]{2^{#2}_{#1}}           	                    
\newcommand{\PiM}[1]{\operatorname{SPM}^{#1}}                      
\newcommand{\ProM}[3]{\operatorname{CPM}_{#1}^{#2}\ifnonempty{#3}} 
\newcommand{\RroM}[3]{\operatorname{RPM}_{#1}^{#2}\ifnonempty{#3}} 
\newcommand{\AssM}[3]{\mc{U}_{#2,#1}\ifnonempty{#3}}                
\newcommand{\spct}{\operatorname{spct}}      
\DeclareMathOperator*{\trace}{tr}
\newcommand{\Pt}[3][]{\mc{C}_{#1}^{#2}\ifnonempty{#3}}              
\newcommand{\coll}{\operatorname{col}}      
\begin{document}

\title{
A variant of the Lov\'{a}sz-Theta number based on projection matrices
\thanks{
The authors gratefully acknowledges support by the German Research Foundation (DFG), grant GRK 1653.
}
}

\author{Francesco Silvestri
 \footnote{Institut f\"{u}r Informatik, Heidelberg University, INF205, 69120 Heidelberg, Germany}$^{\; ,}$\footnote{IWR, Heidelberg University, INF205, 69120 Heidelberg, Germany}}

\maketitle

\begin{abstract}
We introduce a new model for the chromatic number $\chi(G)$ based on what we call combinatorial projection matrices, which is a special class of doubly stochastic symmetric projection matrices. Relaxing this models yields an SDP whose optimal value is the projection theta number $\hat{\vartheta}(G)$, which is closely related to the Szegedy number $\vartheta^+(G)$, a variant of the Lov\'{a}sz theta number.
We characterize that in general, $\hat{\vartheta}(G)\leq \vartheta^+(G)$, with equality if $G$ is vertex-transitive. While this seems to imply that working with binary matrices is a better paradigm than working with binary eigenvalues in this context, our approach is slightly faster than computing the Szegedy number on vertex-transitive graphs.

\textbf{Keywords:} Graph colouring, Lov\'{a}sz $\vartheta$-Function, Semidefinite Programming
\end{abstract}

\section{Introduction}

\subsubsection*{Background}

The chromatic number $\chi(G)$ of a graph $G$ is the minimum number of colours needed to colour the nodes of $G$ in such a way that adjacent nodes receive distinct colours. A stable set in $G$ is a set of nodes $S\subseteq V$ such that no pair of nodes is adjacent, so a colouring is a partition of $V$ into stable sets. Both characterizing stable sets and computing $\chi(G)$ are classical NP-hard problems \cite{schrijver03}. Furthermore, it is even NP-hard to approximate $\chi(G)$ within $|V (G)|^{1/14-\epsilon}$ for any $\varepsilon> 0$ \cite{Bellare94}.

In the standard Integer Program Formulation of this problem \cite{Diaz01}, each stable set $S_j$ is represented by a characteristic vector $x_j$ and the number of such vectors being used is minimized. This gives the problem a highly symmetrical structure, as any permutation of the colours $\{x_j\}_{j\in[k]}$ results in new colourings of equal objective. As a consequence, methods based on convex relaxations like the LP relaxation usually perform badly unless they break these symmetries. Unfortunately, even though symmetry breaking constraints are available \cite{Diaz01}\cite{FaenzaK09}, LP relaxations usually still suffer from poor bounds due to the hardness of the problem.

On the other hand, the colouring problem can also be formulated as a rank constrained matrix problem which implicitly models the colouring $X$ as a sum of matrices $X_j$ corresponding to stable sets \cite{50years}. This removes the symmetry, as any permutation of the colours $X_j$ will only change the order of summation, which does not change the colouring $X$. This formulation can then be relaxed to an SDP relaxation and the resulting bound $\overline{\vartheta}(G)$ is well-know as the Lov\'{a}sz theta number $\vartheta(G)$ applied on the complement graph \cite{Lovasz79}. In general, and in particular for perfect graphs, $\overline{\vartheta}(G)$ is usually much better than the LP bound \cite{schrijver03}.

\subsubsection*{Summary}

In this paper introduce and analyse a new formulation for $\chi(G)$ based on combinatorial projection matrices, which is constructed to admit an SDP relaxation. 
The resulting SDP relaxation is closely related to the Szegedy-number, a variant of the Lov\'{a}sz theta number, which we compare against. 
While the latter relaxes matrices with binary entries, our approach relaxes binary eigenvalues instead, and we are able to prove explicit relations between both relaxations.\\

\subsubsection*{Related Work}
A lot of work has been dedicated to variants of $\overline{\vartheta}(G)$ with the hope of improving towards $\chi(G)$. Early work mostly consists of adding linear inequalities based on non-negativity and triangle inequalities, summarized in \cite{Dukanovic07}. 
Other work \cite{Rendl08}\cite{Lasserre15} shows that converging SDP hierarchies of growing sizes are available, but these relaxations quickly grow too big for actual computations.
Finally, \cite{Laurent08} compares the preceding results in a unified way while also using a reduction of colouring onto the stable set problem to yield even better bounds.\\

\subsubsection*{Organization of the paper}
In Section \ref{sec:colouring}, we shortly recall the relevant results about colouring. 
Section \ref{sec:combinatorial_projection_matrices} reviews what we call combinatorial projection matrices and states a few basic properties. 
In Section \ref{sec:projection_theta} we establish a new formulation for the colouring number $\chi(G)$ based on combinatorial projection matrices and introduce the projection $\vartheta$-number $\hat{\vartheta}(G)$ as SDP relaxation of this formulation. 
Additionally, we compare $\hat{\vartheta}(G)$ with the Szegedy-number $\vartheta^+(G)$ analytically and show that in general, $\vartheta^+(G)\leq \hat{\vartheta}(G)$,  while both numbers agree on vertex-transitive graphs.
Finally, Section \ref{sec:MM_Extention} investigates if the method of moments can help to close the gap between the two relaxations. 

\subsection{Notation}

Small letters like $a,x,\lambda$ are scalars or elements of a set and capital letters like $S,T,U$ describe sets. Small bold letters are used for vectors like $\vect{a}, \vect{x}$, $\vect{\lambda}$ and capital bold letters for matrices like $\matr{A}, \matr{X}, \matr{\Lambda}$. Instead of writing index sets of small size like $\{i\}$ or $\{i,j\}$, we may write $i$ or $ij$ respectively. 

For any $n\in \N$, let $[n]:=\{1,\ldots, n\}$. $\graphs{n}$ denotes the set of simple graphs $G$ whose node set is $[n]$ and whose edge set is $E(G)$. The adjacency matrix of $G\in \graphs{n}$ is denoted by $\matr{A}_G$. 

Given a set $I\subseteq [n]$, its \emph{characteristic vector}\index{characteristic vector} $\e[I]\in \{0,1\}^n$ is coordinate-wise defined as
\begin{equation*}
(\e[I])_i = \begin{cases} 1 &\text{if } i\in I, \\ 0 & \text{else.}\end{cases}
\end{equation*}
Geometrically, the map $I\mapsto \e[I]$ bijectively maps the power set $\Pot{n}$ to the vertices of the $n$-dimensional unit cube $[0,1]^n$. If $n$ is clear from context, we will write $\e[]$ instead of $\e[{[n]}]$. The matrix $\matr{I}_n$ is the $n$-dimensional identity, $\matr{J}_{n_1,n_2}=\e[{[n_1]}]\e[{[n_2]}]^\T$ is the matrix of all-ones, and we might write $\matr{J}_n$ or simply $\matr{J}$ instead of $\matr{J}_{n,n}$ if clear from context. 
For a matrix $\matr{A}$, $\coll(\matr{A})$ denotes the set of its columns, and if it is quadratic, $\spct(\matr{A})$ denotes its spectrum and $\tr(\matr{A})$ its trace. 

$\R^n_+$ is the cone of vectors in $\R^n$ with non-negative entries with conic order $\leq$ and $\mc{S}^n_+$ is the cone of positive semidefinite $n\times n$ matrices with conic order $\preceq$. The standard simplex is denoted as
\begin{equation*}
\Delta^n := \left\{ \vect{x}\in \R^n_+ \I \la \vect{x}, \e[]\ra =1\right\}.
\end{equation*}
The Kronecker-delta $\delta_{\Psi}$ assumes the truth values 0/1 of the statement $\Psi$, where $\delta_{i,j}$ is short for $\delta_{i=j}$.



\section{Preliminaries}\label{sec:colouring}

This section serves as a short recapitulation of the basics about graph colouring. For an extensive survey on convex relaxations for graph colouring, consider \cite{Laurent08}.

\subsection{Stable Sets}

\begin{definition}
A \emph{stable set}\index{stable set} of a graph $G\in \graphs{n}$ is a subset $S\subseteq [n]$ such that the subgraph induced by $S$ does not contain any edges. The set of stable sets in $G$ is denoted by $S_G\subseteq \Pot{[n]}$ and satisfies the properties of an independence system. The \emph{stable set number}\index{stable set!number} $\alpha(G)$ is the biggest size $|S|$ of any stable set in $G$ and defines a function $\alpha: \graphs{n}\rightarrow \N$. 
The \emph{clique number} $\omega(G)$ of $G$ is the biggest size $|S|$ of a stable set in the complement graph $G$, so $\omega(G):=\alpha(\overline{G})$. 
\end{definition}

Formally, we can express $\alpha(G)$ as the solution to an integer problem by working with characteristic vectors $\vect{x}\in \{0,1\}^n$ via
\begin{align}
\alpha(G)
&=\max  \left\{ \la \vect{x},\e\ra \I x_i x_j= 0\quad \forall (i,j)\in E,\, \vect{x}\in\{0,1\}^n\right\}\nonumber \\
&=\max \left\{ \la \vect{x},\e\ra \I \vect{x}^\T \matr{A}_G \vect{x} = 0 ,\, \vect{x}\in \{0,1\}^n \right\},\label{eq:STABLE}
\end{align}
where $\matr{A}_G$ denotes the adjacency matrix of $G$. 

%

\subsection{Graph Colourings}

\begin{definition}
A \emph{$k$-colouring}\index{colouring} of a graph $G\in \graphs{n}$ is a $k$-partition $\mc{T}=(T_1 , \ldots ,T_k )$ of the node set $[n]$, such that each $T_i$ is a stable set in $G$. The \emph{chromatic number}\index{chromatic number} $\chi(G)$ is the smallest value $k$ such that $G$ has a $k$-colouring. The set of $k$-colourings of $G$ is denoted by $\Pt[k]{}{G}$
\end{definition}


In the following, we will describe how to represent $k$-colourings as \emph{assignment matrices}\index{assignment matrix}  by treating their characteristic vectors as columns of an $n\times k$ binary matrices. To this end, let $S_G^*:=S_G\setminus \{\emptyset\}$ to denote
\begin{equation*}
V(S_G)= \left\{ \e[I]\in \{0,1\}^n \I I\in S_G^*\right\}=\left\{ \vect{x}\in \{0,1\}^n \I \supp(\vect{x})\in S_G^* \right\}
\end{equation*}
be the vertex-set associated with $S_G$ and define the set of $S_G$-constrained assignment matrices 
\begin{equation}
\AssM{k}{n}{S_G} = \left\{ \matr{U} \in \{0,1\}^{n\times k} \I \matr{U}\e[{[k]}]=\e[{[n]}],\; \coll(\matr{U})\subseteq  V(S_G) \right\}. \label{eq:AssM_Definition}
\end{equation}
Additionally, we will also use the shorthand
\begin{equation*}
\AssM{k}{n}{}:=\AssM{k}{n}{\Pot{[n]}} = \left\{ \matr{U} \in \{0,1\}^{n\times k} \I \matr{U}\e[{[k]}]=\e[{[n]}],\; \vect{0}\notin \coll(\matr{U}) \right\}.
\end{equation*}
With this notation at hand, we can express the colouring number simply as
\begin{equation*}
\chi(G)= \min\left\{ k \I \AssM{k}{n}{S_G}\neq \emptyset\right\}.
\end{equation*}

\begin{example}\label{ex:assignment_and_partition}
Let $G\in \graphs{3}$ be the graph given by the adjacency matrix 
\begin{equation*}
\matr{A}_G:=\bpm 0 & 0 & 1\\ 0 & 0 & 1\\ 1 & 1 & 0\epm.
\end{equation*}
Then $\left\{\{1,2\}, \{3\}\right\}$ is a valid colouring, and ordering the corresponding characteristic vectors lexicographically leads to the assignment matrix
\begin{equation*}
\matr{U}= \bpm 
1 & 0 \\ 
1 & 0 \\ 
0 & 1
\epm \in \AssM{2}{3}{S_G}.
\end{equation*}
\end{example}

The following hardness result shows that an explicit description of neither $\AssM{k}{n}{S_G}$ nor $S_G$ can easily be found:

\begin{theorem}[\cite{schrijver03,Bellare94}]\label{thm:hardness_colouring}
Computing $\alpha(G)$ or $\chi(G)$ is \emph{NP}-hard. Furthermore, it is \emph{NP}-hard to approximate $\chi(G)$ within $n^{1/14-\epsilon}$ for any $\varepsilon> 0$.
\end{theorem}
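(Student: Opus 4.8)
This theorem merely quotes two classical hardness results, so rather than a genuine new proof I would sketch the standard arguments behind each half, which rely on very different machinery.

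\textbf{NP-hardness of $\alpha$ and $\chi$.} For the stable-set number the plan is a polynomial reduction from 3-SAT to the decision problem ``does $G$ admit a stable set of size at least $k$?''. Given a 3-CNF formula with clauses $C_1,\dots,C_m$, I would build a graph with one vertex per literal-occurrence, a triangle on the three vertices of each clause, and an edge between every pair of vertices labelled by a variable and its negation. Any stable set contains at most one vertex per clause-triangle, so its size is at most $m$; a stable set of size exactly $m$ selects one satisfied literal from each clause while the negation edges forbid choosing $x$ and $\bar x$ together, hence encodes a satisfying assignment. Thus $\alpha(G)=m$ iff the formula is satisfiable, proving NP-hardness of computing $\alpha$. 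For $\chi$ I would instead invoke the NP-completeness of $3$-colourability, itself obtained from a standard 3-SAT gadget reduction, and pad it to conclude that deciding $\chi(G)\le k$ is NP-hard for every fixed $k\ge 3$.

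\textbf{Inapproximability.} The second half is the substantial part and rests on the PCP theorem rather than on a single combinatorial gadget. The route I would follow is the FGLSS reduction: a probabilistically checkable proof system for an NP-complete language, with suitable completeness and soundness, is converted into a graph whose maximum independent set encodes the verifier's maximum acceptance probability, so that a multiplicative gap in that probability becomes a multiplicative gap in $\alpha$. Amplifying the gap by randomized graph products, while carefully controlling the verifier's randomness and amortized free-bit complexity as in Bellare--Goldreich--Sudan, yields hardness of approximating $\alpha$ within an explicit factor $n^{c}$. To transfer this to $\chi$ I would use a gap-preserving reduction to the chromatic-number gap problem, exploiting the sandwich $\chi(G)\ge n/\alpha(G)$ so that a small stable-set number forces many colours; optimising every parameter along this chain is precisely what produces the stated factor $n^{1/14-\varepsilon}$.

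The gadget reductions in the first part are routine. The genuine difficulty, and the reason the statement is only cited rather than reproved here, lies entirely in the second part: constructing a PCP verifier with tight enough parameters and pushing the gap through graph-product amplification and the reduction to colouring without excessive loss, so that the exponent lands at exactly $1/14$. I would not attempt to reconstruct this bookkeeping and would simply defer to \cite{schrijver03,Bellare94}.
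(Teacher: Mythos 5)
The paper gives no proof of this theorem---it is quoted directly from \cite{schrijver03,Bellare94}---and your sketch is a faithful outline of the standard arguments in those sources, so there is nothing in the paper to compare against and your deferral of the hard quantitative work to the citations is exactly what the paper itself does. The only imprecision worth flagging is that the transfer from independent-set inapproximability to chromatic-number inapproximability is not obtained merely from the inequality $\chi(G)\ge n/\alpha(G)$ (that gives only the soundness direction); the completeness direction requires a Lund--Yannakakis-type construction ensuring the ``yes'' instances are coverable by few independent sets, which is where the exponent $1/14$ is actually determined, but you correctly leave that bookkeeping to the cited works.
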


In spite of being NP-hard to compute in general, $\chi(G)$ behaves much nicer restricted to special classes of graphs with regards to the following construction.

\begin{definition}[Lov\'{a}sz theta number \cite{Lovasz79}]
The \emph{Lov\'{a}sz theta number}\index{Lov\'{a}sz theta number} $\vartheta(G)$ is the optimal value of the following primal/dual SDP problems
\begin{align*}
\max &\left\{ \la \matr{X},\matr{J}_n\ra \I  \la \matr{X}, \matr{I}_n\ra =1,\, \la \matr{X}, \matr{A}_{G}\ra =0,\, \matr{X}\succeq \matr{0}\right\},
\tag{$\vartheta$-P}\label{eq:PRIMAL}\\ 
\min &\left\{k \I \diag(\matr{Y})=\e, \, \la \matr{Y}, \matr{A}_{\overline{G}}\ra =0,\, \bpm k & \e^\T \\ \e & \matr{Y}\epm\succeq \matr{0} \right\}.
\tag{$\vartheta$-D}\label{eq:DUAL}
\end{align*}
\end{definition}

\begin{theorem}[Lov\'{a}sz sandwich Theorem \cite{Lovasz87}]\label{thm:Lovasz_sandwich}
For all $G\in \graphs{n}$, we have
\begin{equation*}
\omega(G)\leq \vartheta(\overline{G})\leq \chi(G).
\end{equation*}
\end{theorem}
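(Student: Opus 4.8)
The plan is to prove the two inequalities of the sandwich theorem separately, since they follow from quite different considerations.

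\textbf{The upper bound $\vartheta(\overline{G}) \leq \chi(G)$.}
First I would establish this by exhibiting an explicit dual-feasible point. Suppose $G$ admits a $k$-colouring $\mc{T}=(T_1,\ldots,T_k)$, so $\chi(G)=k$. The idea is to build a matrix $\matr{Y}$ for the dual problem \eqref{eq:DUAL} applied to $\overline{G}$ out of the combinatorial structure of the colouring. Each colour class $T_\ell$ is a stable set in $G$, hence a clique in $\overline{G}$; the natural candidate is to place suitable constant blocks supported on the pairs inside each $T_\ell$. Concretely, I would take $\matr{Y}$ to be block-diagonal after permuting rows/columns by the colour classes, with the $\ell$-th block being a $|T_\ell|\times|T_\ell|$ all-ones-type block scaled so that $\diag(\matr{Y})=\e$ holds, and zero between classes. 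Since within a colour class every off-diagonal pair is a non-edge of $G$, i.e.\ an edge of $\overline{G}$, one must check that these supported entries are annihilated against $\matr{A}_{\overline{G}}$ — this is where the constant value on each block must be chosen correctly (typically each block equals $\e\e^\T$ on that class, and the edges of $\overline{G}$ that lie \emph{between} classes are killed because $\matr{Y}$ is zero there, while within a class the required orthogonality against $\matr{A}_{\overline{G}}$ must be enforced by instead using the all-ones pattern times an appropriate rank-one correction). The remaining task is to verify the $2\times 2$ block positive-semidefiniteness condition with the chosen value $k$, which reduces by a Schur-complement argument to checking $\matr{Y} - \tfrac{1}{k}\e\e^\T \succeq \matr{0}$; this holds because $\matr{Y}$ decomposes as a sum of $k$ rank-one terms $\e[T_\ell]\e[T_\ell]^\T$ and $\tfrac{1}{k}\e\e^\T$ is dominated by this sum via Cauchy–Schwarz on the indicator vectors.

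\textbf{The lower bound $\omega(G)\leq \vartheta(\overline{G})$.}
For this I would produce a primal-feasible point for \eqref{eq:PRIMAL} applied to $\overline{G}$ of objective value $\omega(G)$. Let $C\subseteq[n]$ be a maximum clique of $G$ with $|C|=\omega(G)$; then $C$ is a stable set of $\overline{G}$. I would set $\matr{X}=\tfrac{1}{|C|}\,\e[C]\e[C]^\T$. This is manifestly rank-one and positive semidefinite, it satisfies $\la \matr{X},\matr{I}_n\ra = \tfrac{1}{|C|}|C|=1$, and because no two vertices of $C$ form an edge of $\overline{G}$, the support of $\matr{X}$ meets no edge of $\overline{G}$, giving $\la\matr{X},\matr{A}_{\overline{G}}\ra=0$. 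Its objective value is $\la\matr{X},\matr{J}_n\ra=\tfrac{1}{|C|}|C|^2=|C|=\omega(G)$, so weak duality (or simply primal feasibility) yields $\omega(G)\leq\vartheta(\overline{G})$.

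\textbf{Main obstacle and remarks.}
The lower bound is essentially a one-line construction. The genuine work lies in the upper bound, and specifically in (i) getting the within-class entries of $\matr{Y}$ to be simultaneously compatible with $\diag(\matr{Y})=\e$, with orthogonality to $\matr{A}_{\overline{G}}$, and with the PSD constraint, and (ii) discharging the Schur-complement positivity. I expect the cleanest route is to write $\matr{Y}=\sum_{\ell=1}^{k}\e[T_\ell]\e[T_\ell]^\T$, note that this forces the correct diagonal since the $T_\ell$ partition $[n]$, observe that its only nonzero off-diagonal entries sit at within-class pairs which are non-edges of $G$ hence \emph{edges} of $\overline{G}$ — so a more careful choice of support (restricting to the diagonal plus between-class structure, or invoking the precise definition of the feasibility constraints) is needed to secure $\la\matr{Y},\matr{A}_{\overline{G}}\ra=0$. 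Resolving this tension correctly is the crux; once the right $\matr{Y}$ is fixed, the inequality $\bigl(\begin{smallmatrix} k & \e^\T\\ \e & \matr{Y}\end{smallmatrix}\bigr)\succeq\matr{0}$ follows from the rank-one decomposition together with the partition property, completing the proof.
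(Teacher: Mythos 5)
The paper does not prove this statement --- it is quoted from Lov\'{a}sz --- so your argument has to stand on its own. The lower-bound half does: $\matr{X}=\tfrac{1}{|C|}\e[{C}]\e[{C}]^\T$ for a maximum clique $C$ of $G$ is rank-one, has unit trace, is supported off the edges of $\overline{G}$, and has objective value $|C|$, so $\omega(G)\le\vartheta(\overline{G})$. That part is fine.

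The upper bound, however, contains a genuine gap, and it comes from mis-instantiating the dual. The program \eqref{eq:DUAL} is written for $\vartheta(G)$ with the constraint $\la \matr{Y},\matr{A}_{\overline{G}}\ra=0$; to bound $\vartheta(\overline{G})$ you must substitute $\overline{G}$ for $G$ throughout, and since complementation is an involution the constraint becomes $\la \matr{Y},\matr{A}_{G}\ra=0$, \emph{not} $\la \matr{Y},\matr{A}_{\overline{G}}\ra=0$. You kept the latter, which is why you perceive a ``tension'': the within-class pairs of a colouring are indeed edges of $\overline{G}$, so an all-ones block on each colour class cannot be orthogonal to $\matr{A}_{\overline{G}}$, and the ``rank-one correction'' and ``more careful choice of support'' you invoke to escape this are never specified --- you explicitly leave the crux unresolved. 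With the correct constraint the tension disappears: take $\matr{Y}=\sum_{\ell=1}^{k}\e[{T_\ell}]\e[{T_\ell}]^\T$ for a $k$-colouring $(T_1,\dots,T_k)$ with $k=\chi(G)$. Then $\diag(\matr{Y})=\e$ because the classes partition $[n]$; the only nonzero off-diagonal entries sit at within-class pairs, which are non-edges of $G$ because each $T_\ell$ is a stable set, so $\la\matr{Y},\matr{A}_{G}\ra=0$ holds automatically; and the Schur-complement condition $\matr{Y}\succeq\tfrac{1}{k}\e\e^\T$ follows from the Cauchy--Schwarz inequality
\begin{equation*}
\Big(\sum_{\ell}\la \e[{T_\ell}],\vect{x}\ra\Big)^2\le k\sum_{\ell}\la \e[{T_\ell}],\vect{x}\ra^2\quad\text{for all }\vect{x}\in\R^n,
\end{equation*}
exactly as you anticipated. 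So the PSD step of your sketch is right, but the feasibility step needs the corrected constraint for the proof to close.
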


This theorem is a celebrated result for \emph{perfect graphs}\index{graph!perfect}, where $\omega(H)=\chi(H)$ for all induced subgraphs $H\subseteq G$, and $\chi(G)$ can be computed in polynomial time. For more on the theory of perfect graphs, consider the recent survey \cite{Trotignon15}.

Despite the hardness results of Theorem \ref{thm:hardness_colouring}, a lot of publications are dedicated to improve $\vartheta(G)$ for imperfect graphs. We exemplarily introduce the following simple improvements taken from \cite{Dukanovic07} and \cite{Laurent08}, where several other relaxations for $\chi(G)$ may be found.

\begin{remark}
Adding $\matr{X}\geq \matr{0}$ to \eqref{eq:PRIMAL} and $\matr{Y}\geq \matr{0}$ to \eqref{eq:DUAL} results in the Schrijver number $\vartheta^-(G)$ \cite{Schrijver79} and the Szegedy number $\vartheta^+(G)$ \cite{Szegedy94} respectively. In particular, we get
\begin{equation*}
\omega(G)\leq \vartheta^-(\overline{G}) \leq \vartheta(\overline{G})\leq \vartheta^+(\overline{G})\leq \chi(G).
\end{equation*}
\end{remark}

\section{Combinatorial projection matrices}\label{sec:combinatorial_projection_matrices}

For the rest of this paper, let $\matr{R}=\left( \vect{r}_1,\ldots, \vect{r}_n\right) = \{r_{ij}\}_{i,j\in [n]} \in \R^{n\times n}$. We first recall some basic properties about projection matrices arising from orthogonal projections onto subspaces. 

\begin{definition}
Let $\matr{X}\in \R^{n\times k}$ be a matrix with full column rank to define its corresponding \emph{projection matrix}\index{projection matrix} as
\begin{equation*}
\rho(\matr{X}):=\matr{X} (\matr{X}^\T \matr{X})^{-1} \matr{X}^{\T}\in \R^{n\times n}.
\end{equation*}
The set of \emph{symmetric projection matrices} of size $n$ is defined as 
\begin{equation*}
\PiM{n}:=\left\{ \matr{R}\in \R^{n\times n} \I \matr{R}^2=\matr{R}, \, \matr{R}=\matr{R}^\T  \right\}\subseteq \mc{S}^n_+.
\end{equation*}
\end{definition}

Note that by assumption, $\matr{X}^\T \matr{X}$ is invertible and $\rho(\matr{X})\in~\PiM{n}$. We also recall some important properties of projection matrices:

\begin{lemma}\label{lem:projection_matrices_properties}
Let $\matr{R}\in \PiM{n}$ and $\matr{X}\in \R^{n\times k}$ with full column rank. Then the following holds:
\begin{enumerate}[(i)]
\item Binary eigenvalues: $\spct(\matr{R})\subseteq \{0,1\}$,
\item Rank equation: $\tr(\matr{R})=\rank(\matr{R})$,
\item Projection property: $\rho(\matr{X})\matr{X} =\matr{X}$, $\rho(\matr{X})\cdot \R^n=\matr{X}\cdot \R^k$,
\item Isometry invariance: $\rho(\matr{X})=\rho(\matr{XQ})$ for all orthogonal $\matr{Q}\in \R^{n\times n}$.
\end{enumerate}
\end{lemma}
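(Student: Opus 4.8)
The plan is to prove the four claims independently, drawing only on the defining identities $\matr{R}^2=\matr{R}=\matr{R}^\T$ and on the fact that full column rank of $\matr{X}$ makes the Gram matrix $\matr{X}^\T\matr{X}$ invertible. Claims (i) and (ii) are spectral statements about $\matr{R}$, whereas (iii) and (iv) are purely algebraic consequences of the definition of $\rho$.

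For (i), I would take an arbitrary eigenpair $\matr{R}\vect{v}=\lambda\vect{v}$ with $\vect{v}\neq\vect{0}$, apply $\matr{R}$ once more, and substitute $\matr{R}^2=\matr{R}$ to obtain $\lambda\vect{v}=\matr{R}^2\vect{v}=\lambda^2\vect{v}$; since $\vect{v}\neq\vect{0}$ this forces $\lambda^2=\lambda$, hence $\lambda\in\{0,1\}$. For (ii), I would invoke the spectral theorem to diagonalize the symmetric matrix orthogonally as $\matr{R}=\matr{Q}\matr{D}\matr{Q}^\T$ with $\matr{D}=\diag(\lambda_1,\dots,\lambda_n)$; by (i) each $\lambda_i\in\{0,1\}$, so $\tr(\matr{R})=\sum_i\lambda_i$ counts exactly the unit eigenvalues, which is precisely $\rank(\matr{D})=\rank(\matr{R})$.

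For (iii), both assertions follow by cancellation. The identity $\rho(\matr{X})\matr{X}=\matr{X}(\matr{X}^\T\matr{X})^{-1}(\matr{X}^\T\matr{X})=\matr{X}$ is immediate, and for the range equality the inclusion $\rho(\matr{X})\cdot\R^n\subseteq\matr{X}\cdot\R^k$ is clear from the shape of $\rho(\matr{X})$, while the reverse follows by writing $\matr{X}\vect{z}=\rho(\matr{X})(\matr{X}\vect{z})$ via the first identity. For (iv), I would substitute $\matr{XQ}$ into the definition of $\rho$, with $\matr{Q}$ taken of the compatible size $k\times k$ so that $\matr{XQ}$ is defined and still has full column rank, and use orthogonality $\matr{Q}^{-1}=\matr{Q}^\T$ to expand $\bigl((\matr{XQ})^\T\matr{XQ}\bigr)^{-1}=\matr{Q}^\T(\matr{X}^\T\matr{X})^{-1}\matr{Q}$; the surrounding factors of $\matr{Q}\matr{Q}^\T=\matr{I}_k$ then cancel and return $\rho(\matr{X})$.

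The only genuine obstacle is the bookkeeping in (iv): one must keep the orthogonal factors and their transposes in the correct order when inverting the triple product and verify that the flanking $\matr{Q}$-terms collapse to the identity. Alternatively, (iv) can be argued conceptually by noting that $\rho(\matr{X})$ is the unique orthogonal projector onto the range of $\matr{X}$; since multiplication on the right by an invertible $\matr{Q}$ preserves that range, $\rho(\matr{XQ})$ must coincide with $\rho(\matr{X})$, sidestepping the computation altogether. Everything else reduces to a one-line use of the definitions.
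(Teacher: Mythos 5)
Your proof is correct and follows essentially the same route as the paper, whose own proof simply asserts that (i) follows from the polynomial identity $\matr{R}^2=\matr{R}$ (via Cayley--Hamilton) and that (ii)--(iv) are immediate from the definitions; your direct eigenpair computation for (i) is an equivalent and arguably cleaner justification. You also rightly observe that $\matr{Q}$ in (iv) must be $k\times k$ for $\matr{XQ}$ to be defined, which silently corrects a dimension slip in the statement of the lemma.
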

\begin{proof}
$(i)$ follows from the Cayley-Hamilton theorem applied to the matrix polynomial $\matr{R}^2=\matr{R}$. For $(ii)$, note that $\tr(\matr{R})$ is equivalent to the sum of its eigenvalues, which count the rank due to $(i)$. $(iii)$ and $(iv)$ follow directly from the definition.
\end{proof}

\begin{example}
In Example \ref{ex:assignment_and_partition}, we considered the assignment matrix of the 2-colouring $\left\{ \{1,2\},\{3\}  \right\}$ given by
\begin{equation*}
\matr{U}=
\bpm 
0 & 1 \\ 
0 & 1 \\ 
1 & 0
\epm.
\end{equation*}
Applying $\rho$ to $\matr{U}$ leads us to the corresponding projection matrix
\begin{equation*}
\rho(\matr{U})= 
\bpm 
1 & 0 \\ 
1 & 0 \\ 
0 & 1 \\ 
\epm
\bpm 
2 & 0 \\ 
0 & 1 \\ 
\epm^{-1}
\bpm 
1 & 1 & 0 \\ 
0 & 0 & 1 \\ 
\epm
=
\bpm 
\tfrac{1}{2} & \tfrac{1}{2} & 0 \\ 
\tfrac{1}{2} & \tfrac{1}{2} & 0 \\ 
0 & 0 & 1
\epm.
\end{equation*}

\noindent Denoting the columns as $\rho(\matr{U})=\bpm \vect{r}_1, \vect{r}_2, \vect{r}_3\epm$, we observe for the following that
\bitem 
\item $\matr{U}^\T \matr{U}$ is a diagonal matrix which contains the size of parts of the partition,
\item the columns $\{\vect{r}_1,\vect{r}_2,\vect{r}_3\}$ consist of all eigenvectors of $\rho(\matr{U})$ with repetition,
\item $\{\vect{r}_1,\vect{r}_2,\vect{r}_3\}\subseteq \Delta^3$,
\item  $\|\vect{r}_i\|_0 \cdot \|\vect{r}_i\|_\infty =1$ holds for each column $i\in [3]$,
\item $\vect{r}_i = \vect{r}_j$ if and only if $(\vect{r}_i)_j=(\vect{r}_j)_i > 0$.
\eitem
\end{example}

The preceding example motivates the following definition of a special class of doubly stochastic matrices.

\begin{definition}\label{def:PiM}
The set of \emph{combinatorial projection matrices}\index{combinatorial projection matrices} $\ProM{k}{n}{S_G}$ is given as
\begin{equation*}
\ProM{k}{n}{S_G} = \left\{ \matr{R}\in \R^{n\times n} \I
\begin{array}{rlr}
\coll(\matr{R})                       &\subseteq\Delta^n_{S_G}                        \\
r_{ij}\cdot (\vect{r}_i -\vect{r}_j) &= \vect{0}            & \forall i,j\in [n] \\
\matr{R}                             &=\matr{R}^\T                               \\
\trace(\matr{R})                     &=k
\end{array}
\right\},
\end{equation*}
where
\begin{equation*}
\Delta^n_{S_G} := \left\{ \vect{r}\in \Delta^n \I \supp(\vect{r})\in S_G \right\}.
\end{equation*}
The quadratic equations 
\begin{equation*}
 r_{ij} \cdot (\vect{r}_i -\vect{r}_j) = \vect{0}\quad \forall i,j\in [n]
\end{equation*}
will be called \emph{block-inducing}\index{block-inducing constraints} in the following.
\end{definition}

As expected, this set contains the projection matrices corresponding to assignment matrices, which is shown by the following two lemmas.

\begin{lemma}\label{lem:double_stochastic_projection}
Combinatorial projection matrices are projection matrices
\begin{equation*}
\ProM{k}{n}{S_G}\subseteq \PiM{n}
\end{equation*}
and have strictly positive diagonal $\diag(\matr{R})>\vect{0}$. In particular, each column $\vect{r}_i$ is uniquely determined by its support via
\begin{equation*} 
r_{ji} = \delta_{j\in \supp(\vect{r}_i )} \frac{1}{\| {\vect{r}_i }\|_0}
\end{equation*}
and satisfies the equation
\begin{equation*}
\|\vect{r}_i\|_0 \cdot \|\vect{r}_i\|_\infty = 1.
\end{equation*}
\end{lemma}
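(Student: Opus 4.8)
The plan is to exploit the interaction between the symmetry constraint $\matr{R}=\matr{R}^\T$ and the block-inducing equations to show that $\matr{R}$ is, after a permutation of the indices, block-diagonal with each block a scaled all-ones matrix $\tfrac{1}{|C|}\matr{J}_{|C|}$; every assertion of the lemma then follows at once. First I would record the immediate content of the block-inducing constraints. Since $r_{ij}=(\vect{r}_j)_i$ and $\matr{R}$ is symmetric, the relation $i\approx j :\Leftrightarrow r_{ij}\neq 0$ is symmetric, and $j\in\supp(\vect{r}_i)\Leftrightarrow i\in\supp(\vect{r}_j)$. Moreover the equation $r_{ij}(\vect{r}_i-\vect{r}_j)=\vect{0}$ says precisely that $i\approx j$ forces the two columns to coincide, $\vect{r}_i=\vect{r}_j$.

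Next I would establish the positive diagonal, which simultaneously yields reflexivity of $\approx$. Each column lies in $\Delta^n$ and is therefore nonzero, so $\supp(\vect{r}_i)\neq\emptyset$; choosing any $j\in\supp(\vect{r}_i)$ gives $r_{ij}=r_{ji}>0$, whence $\vect{r}_i=\vect{r}_j$ and in particular $r_{ii}=(\vect{r}_i)_i=(\vect{r}_j)_i=r_{ij}>0$. Thus $\diag(\matr{R})>\vect{0}$ and $i\approx i$ for every $i$. Transitivity is then immediate: if $\vect{r}_i=\vect{r}_j$ and $\vect{r}_j=\vect{r}_l$, then $l\in\supp(\vect{r}_j)=\supp(\vect{r}_i)$. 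Hence $\approx$ is an equivalence relation whose classes $C$ partition $[n]$, with $\supp(\vect{r}_i)=C$ and a common column $\vect{r}_i=\vect{r}_j$ for all $i,j\in C$.

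The crux is to see that this common column is \emph{uniform} on its support, and this is exactly where symmetry re-enters. For $i,j$ in the same class $C$ the common column $\vect{r}$ satisfies $(\vect{r})_i=r_{ij}=r_{ji}=(\vect{r})_j$, so $\vect{r}$ is constant on $C$; as it sums to one over $|C|=\|\vect{r}_i\|_0$ coordinates, it must equal $\tfrac{1}{|C|}\e[C]$. This gives both the claimed formula $r_{ji}=\delta_{j\in\supp(\vect{r}_i)}/\|\vect{r}_i\|_0$ and, reading off $\|\vect{r}_i\|_\infty=1/|C|$, the identity $\|\vect{r}_i\|_0\cdot\|\vect{r}_i\|_\infty=1$. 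I expect this uniformity step to be the only subtle point: the block-inducing constraints alone merely glue equal columns together, and it is the symmetry of $\matr{R}$ that collapses each glued block to a multiple of $\matr{J}$.

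Finally, to conclude $\matr{R}\in\PiM{n}$ it remains only to verify idempotence, since symmetry is assumed. Ordering the indices by equivalence class exhibits $\matr{R}$ as block-diagonal with diagonal blocks $\tfrac{1}{|C|}\matr{J}_{|C|}$ (the off-diagonal blocks vanish because a column supported on $C$ carries no mass outside $C$), and each such block is idempotent because $\matr{J}_{|C|}^2=|C|\,\matr{J}_{|C|}$. Hence $\matr{R}^2=\matr{R}$, establishing $\ProM{k}{n}{S_G}\subseteq\PiM{n}$. Note that the stable-set restriction on the supports plays no role in this argument beyond the inclusion $\Delta^n_{S_G}\subseteq\Delta^n$.
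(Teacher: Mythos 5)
Your proof is correct, but it takes a genuinely different, more structural route than the paper's. The paper argues computationally: combining the block-inducing equations for the pairs $(i,l)$ and $(i,j)$ with symmetry gives $r_{il}r_{jl}=r_{ij}r_{jl}$ for all $l$, whence $(\matr{R}^2)_{ij}=\sum_{l}r_{il}r_{jl}=r_{ij}\sum_{l}r_{jl}=r_{ij}$ by row-stochasticity; the column formula then falls out of the single scalar consequence $r_{ij}\in\{0,r_{ii}\}$ of the block-inducing equation together with $\vect{r}_i\in\Delta^n$. You instead first extract the full combinatorial normal form --- the relation $r_{ij}\neq 0$ is an equivalence relation, columns are constant on each class, and after reordering $\matr{R}$ is a direct sum of blocks $\tfrac{1}{|C|}\matr{J}_{|C|}$ --- and then read off every assertion, including idempotence, from that decomposition. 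All of your steps check out (the positivity of the diagonal, transitivity, and the uniformity-via-symmetry argument are each sound), and your identification of symmetry as the ingredient that collapses each glued block to a multiple of $\matr{J}$ is a fair diagnosis of where the content lies. What your approach buys is strictly more information: the block structure you derive is essentially the surjectivity half of Theorem~\ref{thm:assignment_projection_bijection} (the well-definedness of the map $\psi$ onto partitions), so you front-load work the paper defers; the cost is a longer argument than the paper's three-line index manipulation.
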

\begin{proof}
Let $\matr{R}\in \ProM{k}{n}{S_G}$ and choose any $i,j,l\in [n]$. By assumption, the block-inducing equations $r_{il}\cdot (\vect{r}_i - \vect{r}_l) = \vect{0}$  and $r_{ij}\cdot (\vect{r}_i - \vect{r}_j)=\vect{0}$ show the identity
\begin{equation*}
r_{il} r_{jl} = r_{il} r_{ij} =  r_{ij} r_{il} = r_{ij} r_{jl},
\end{equation*}
using the symmetry of $\matr{R}$.
Then $\matr{R}^2=\matr{R}$ follows from
\begin{equation*}
(\matr{R}^2)_{ij}= \sum_{l\in [n]} r_{il} r_{jl} = \sum_{l\in [n]} r_{ij} r_{jl} = r_{ij} \Big(\sum_{l\in [n]}  r_{jl}\Big) = r_{ij}=(\matr{R})_{ij}\quad \forall i,j\in [n].
\end{equation*}
Furthermore, the block-inducing equations $r_{ij}\cdot (\vect{r}_i - \vect{r}_j)=\vect{0}$ show $r_{ii}(r_{ij}-r_{ii})=0$ for all $j\in [n]$, which implies $r_{ij}\in \{0, r_{ii}\}$. Since $\vect{r}_i\in \Delta^n$, we necessarily have $r_{ii}\cdot \|\vect{r}_i\|_0=~1$, which completes the proof.
\end{proof}

\begin{theorem}\label{thm:assignment_projection_bijection}
The combinatorial projection matrices are precisely the projection matrices corresponding to assignment matrices:
\begin{equation*}
\rho(\AssM{k}{n}{S_G}) = \ProM{k}{n}{S_G} \subseteq \PiM{n}.
\end{equation*}
\end{theorem}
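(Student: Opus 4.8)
The plan is to prove the two set inclusions separately, since the containment $\ProM{k}{n}{S_G}\subseteq\PiM{n}$ has already been supplied by Lemma~\ref{lem:double_stochastic_projection}. Both directions will rest on the same explicit block description of the matrices involved, so the real work is to read this structure off the defining constraints on each side.

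For the inclusion $\rho(\AssM{k}{n}{S_G})\subseteq\ProM{k}{n}{S_G}$, I would start from an assignment matrix $\matr{U}\in\AssM{k}{n}{S_G}$ and observe that the row-sum condition $\matr{U}\e[{[k]}]=\e[{[n]}]$ forces the supports of its columns to partition $[n]$ into nonempty stable sets $T_1,\dots,T_k$. These columns are then nonzero with pairwise disjoint supports, hence linearly independent, so $\matr{U}$ has full column rank and $\rho(\matr{U})$ is well defined. Since $\matr{U}^\T\matr{U}=\diag(|T_1|,\dots,|T_k|)$, the projection decomposes as $\rho(\matr{U})=\sum_{c=1}^k\tfrac{1}{|T_c|}\e[T_c]\e[T_c]^\T$, the block matrix carrying $1/|T_c|$ on $T_c\times T_c$ and $0$ elsewhere. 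From this explicit form all defining properties are immediate: symmetry is clear; each column equals $\tfrac{1}{|T_c|}\e[T_c]\in\Delta^n_{S_G}$; the block-inducing equations hold because $r_{ij}\neq 0$ forces $i,j$ into the same class and hence $\vect{r}_i=\vect{r}_j$; and $\tr(\rho(\matr{U}))=\rank(\matr{U})=k$ by Lemma~\ref{lem:projection_matrices_properties}(ii).

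For the reverse inclusion $\ProM{k}{n}{S_G}\subseteq\rho(\AssM{k}{n}{S_G})$, I would take $\matr{R}\in\ProM{k}{n}{S_G}$ and recover a partition from it. The clean way is to introduce the relation $i\sim j:\Leftrightarrow\vect{r}_i=\vect{r}_j$, which is trivially an equivalence relation; the block-inducing constraints together with the strictly positive diagonal of Lemma~\ref{lem:double_stochastic_projection} then show that $\vect{r}_i=\vect{r}_j$ holds exactly when $r_{ij}>0$. Its classes $B_1,\dots,B_m$ partition $[n]$, and for $i\in B_c$ the column formula of Lemma~\ref{lem:double_stochastic_projection} gives $\supp(\vect{r}_i)=B_c\in S_G$ and $\vect{r}_i=\tfrac{1}{|B_c|}\e[B_c]$, so each $B_c$ is a stable set and $\matr{R}$ already has the block form computed above.

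It then remains to assemble the blocks and match the size. I would set $\matr{U}$ to be the $n\times m$ matrix with columns $\e[B_1],\dots,\e[B_m]$; since the $B_c$ partition $[n]$ into stable sets, $\matr{U}\in\AssM{m}{n}{S_G}$ and $\rho(\matr{U})$ equals the block matrix above, i.e.\ $\rho(\matr{U})=\matr{R}$. Counting the diagonal finally yields $k=\tr(\matr{R})=\sum_{c=1}^m\sum_{i\in B_c}\tfrac{1}{|B_c|}=m$, so $\matr{U}\in\AssM{k}{n}{S_G}$ and $\matr{R}\in\rho(\AssM{k}{n}{S_G})$. The only genuinely delicate point is turning the block-inducing constraints into an honest equivalence relation on $[n]$; once that is in place, everything else is bookkeeping on the column formula already established in Lemma~\ref{lem:double_stochastic_projection}.
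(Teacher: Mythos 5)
Your proof is correct and follows essentially the same route as the paper: verify the forward inclusion by computing $\rho(\matr{U})$ explicitly from the partition structure of $\matr{U}$, and obtain surjectivity by recovering the partition from the supports (equivalently, your equivalence classes $\vect{r}_i=\vect{r}_j$, which coincide with the paper's map $\psi(\matr{R})=\{\supp(\vect{r}_i)\mid i\in[n]\}$) and applying the column formula of Lemma~\ref{lem:double_stochastic_projection}. The only presentational difference is that you work with the explicit block decomposition $\rho(\matr{U})=\sum_{c}\tfrac{1}{|T_c|}\e[T_c]\e[T_c]^\T$ rather than evaluating $\rho(\matr{U})\e[i]$ directly, which is a harmless reorganization of the same computation.
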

\begin{proof}
We first show $\rho(\AssM{k}{n}{S_G})\subseteq \ProM{k}{n}{S_G}$, so let $\matr{U}=(\vect{u}_1,\ldots, \vect{u}_k)\in \AssM{k}{n}{S_G}$. We already know $\rho(\matr{U})\in \PiM{n}$ and the corresponding properties from Lemma \ref{lem:projection_matrices_properties}, so we only need to check the first two properties outlined in Definition \ref{def:PiM}. 

For the first, assume $(\vect{u}_j)_i=1$ to see
\begin{equation*}
\rho(\matr{U})\e[i] = \matr{U}(\matr{U}^\T \matr{U})^{-1} \matr{U}^\T \e[i] = \matr{U}(\matr{U}^\T \matr{U})^{-1} \e[j] = \matr{U} \frac{\e[j]}{\la \vect{u}_j, \vect{u}_j \ra}  = \frac{\vect{u}_j}{\la \e, \vect{u}_j \ra}\in \Delta_{S_G}^n,
\end{equation*}
where we used that $\vect{u}_j$ is binary in the second to last equation. 

For the second, assume $\e[i]^\T \rho(\matr{U}) \e[j]>0$, since $\rho(\matr{U})\geq 0$ and there is nothing to show otherwise. Assuming $(\vect{u}_{j'})_{j}=1$ and $(\vect{u}_{i'})_{i}=1$, we have
\begin{equation*}
0 < \e[i]^\T \rho(\matr{U}) \e[j]
= \e[i]^\T \matr{U}(\matr{U}^\T \matr{U})^{-1} \matr{U}^\T \e[j]
= e_{i'}^\T (\matr{U}^\T \matr{U})^{-1} e_{j'} = \delta_{i', j'}\cdot (\matr{U}^\T \matr{U})^{-1}_{i',i'},
\end{equation*}
since $(\matr{U}^\T \matr{U})^{-1}$ is diagonal. It follows that $i'=j'$, which in turn means 
\begin{equation*}
\matr{U}^\T \e[i] = \e[i']=\e[j']= \matr{U}^\T \e[j].
\end{equation*}
In particular,
\begin{equation*}
\rho(\matr{U})(\e[i] - \e[j]) = \matr{U}(\matr{U}^\T \matr{U})^{-1} \left(\matr{U}^\T (\e[i]-\e[j])\right) = \vect{0},
\end{equation*}
which we wanted to show.

The reverse inclusion $\rho(\AssM{k}{n}{S_G})\supseteq \ProM{k}{n}{S_G}$ follows if we can show that $\rho$ is surjective.
To show this, we argue that for $\matr{R}\in \ProM{k}{n}{S_G}$, the map
\begin{equation*}
\psi: \matr{R}\rightarrow \psi(\matr{R})=:\left\{ \supp(\vect{r}_i) \I i\in [n]\right\}\in \Pt[k]{}{G}
\end{equation*}
is well-defined and $\rho \circ \psi$ yields the identity on $\ProM{n}{k}{S_G}$.

Since $\vect{r}_i\in \Delta^n_{S_G}$, we have $\psi(\matr{R})\subseteq S_G^*$. Disjointness of the sets in $\psi(\matr{R})$ follows from the implications of the block-inducing equations, and coverage of $[n]$ follows due to $\matr{R}$ being doubly stochastic and so $\psi(\matr{R})\in \Pt[k]{}{G}$ is well-defined.

Finally, the map $\rho \circ \psi$ is the identity on $\ProM{n}{k}{S_G}$ since $\matr{R}$ is uniquely reconstructed from $\psi(\matr{R})$ by starting with $i\in \supp(\vect{r}_i)$ and applying Lemma \ref{lem:double_stochastic_projection}.
\end{proof}

\subsection{Convexification}\label{sec:convex_hull_projection_matrices}

In the next section, we will reduce the computation of $\chi(G)$ to the task of optimizing a linear function over $\ProM{k}{n}{S_G}$, which shows that a compact description of $\conv(\ProM{k}{n}{S_G})$ is most likely out of reach due to Theorem \ref{thm:hardness_colouring}. For this reason, we are interested to in studying relaxations of this convex hull. 

The set $\ProM{k}{n}{S_G}$ is constructed to make an SDP relaxation immediately available by replacing the non-linear constraints
\begin{equation*}
r_{ij} \cdot (\vect{r}_i - \vect{r}_j) = \vect{0}\quad \forall i,j\in [n]
\end{equation*}
with the psd. constraint $\matr{R}\succeq \matr{0}$, which is implied by the constraint $\matr{R}^2=\matr{R}$ from being a projection matrix.
We thus get the set
\begin{equation*}
\RroM{k}{n}{S_G} := \left\{ \matr{R}\in \R_+^{n\times n} \I \tr(\matr{R})=k,\; \matr{R}\e=\e,\; \la \matr{R}, \matr{A}_G\ra =0,\,    \matr{R}\succeq \matr{0} \right\}.
\end{equation*}

\section{The projection \texorpdfstring{$\vartheta$}{theta}-number}\label{sec:projection_theta}

We now propose a new formulation in terms of projection matrices and relate it to $\vartheta(G)$.

\begin{theorem}
We have the following characterization of the chromatic number:
\begin{align*}
\chi(G)
&=\min\left\{k \I \ProM{k}{n}{S_G}\neq \emptyset \right\}\\
&=\min\left\{\tr(\matr{R}) \I  
\begin{array}{rlr}
 \coll(\matr{R})                         & \subseteq \Delta^n         \\
 r_{ij}                                 &=0              & \forall (i,j)\in E(G) \\         
 r_{ij}\cdot  (\vect{r}_i - \vect{r}_j) &=\vect{0}       & \forall i,j\in [n]    \\
 \matr{R}                               &=\matr{R}^\T                                    
\end{array}
\right\}.
\end{align*}
\end{theorem}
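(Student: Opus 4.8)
The plan is to establish the two displayed equalities separately. The first is essentially a repackaging of Theorem \ref{thm:assignment_projection_bijection}, whereas the second is the substantive part and rests on the block structure exposed in Lemma \ref{lem:double_stochastic_projection}.

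For the first equality I would start from the characterization $\chi(G) = \min\{k \mid \AssM{k}{n}{S_G} \neq \emptyset\}$ recalled just before the theorem. For each fixed $k$, Theorem \ref{thm:assignment_projection_bijection} gives $\rho(\AssM{k}{n}{S_G}) = \ProM{k}{n}{S_G}$; since $\rho$ maps the former onto the latter, the two sets are empty for exactly the same values of $k$, so the minima over $k$ agree. The index $k$ is consistent on both sides because an assignment matrix in $\AssM{k}{n}{S_G}$ has $k$ nonzero, pairwise disjoint columns, hence full column rank $k$, so $\tr(\rho(\matr{U})) = \rank(\rho(\matr{U})) = k$ by Lemma \ref{lem:projection_matrices_properties}.

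For the second equality, write $\mc{F}$ for the feasible set of the trace-minimisation problem, i.e.\ the symmetric matrices whose columns lie in $\Delta^n$, satisfying the block-inducing equations together with $r_{ij} = 0$ for all $(i,j) \in E(G)$. I claim $\mc{F} = \bigcup_{k} \ProM{k}{n}{S_G}$. The crucial observation is that the proof of Lemma \ref{lem:double_stochastic_projection} used only symmetry, the block-inducing equations, and $\coll(\matr{R}) \subseteq \Delta^n$ --- never the membership $\supp(\vect{r}_i) \in S_G$ nor the value of the trace. Hence every $\matr{R} \in \mc{F}$ is automatically a symmetric projection matrix with strictly positive diagonal whose supports are the blocks of a partition of $[n]$, and the only remaining point is to show that, for such $\matr{R}$, the edge constraint is equivalent to $\supp(\vect{r}_i) \in S_G$ for every $i$, i.e.\ that every block is a stable set. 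Granting this, $\matr{R} \in \ProM{k}{n}{S_G}$ with $k = \tr(\matr{R})$, and conversely every element of $\bigcup_k \ProM{k}{n}{S_G}$ clearly lies in $\mc{F}$; since $\tr(\matr{R}) = k$ is constant on each $\ProM{k}{n}{S_G}$, minimising the trace over $\mc{F}$ returns the least $k$ with $\ProM{k}{n}{S_G} \neq \emptyset$, which equals $\chi(G)$ by the first equality.

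The main obstacle is the support/edge equivalence, where the block structure must genuinely be used rather than read off naively. Suppose first that $r_{ij} = 0$ on all edges but some block $\supp(\vect{r}_i)$ contains an edge $(a,b)$; then $a,b \in \supp(\vect{r}_i)$ forces $\vect{r}_a = \vect{r}_b = \vect{r}_i$ through the block-inducing equations, so $r_{ab} = (\vect{r}_i)_b > 0$, contradicting the edge constraint. Conversely, if every support is a stable set and some $(i,j) \in E(G)$ had $r_{ij} > 0$, then $j \in \supp(\vect{r}_i)$ and $i \in \supp(\vect{r}_i)$ (the latter by the strictly positive diagonal), so the stable set $\supp(\vect{r}_i)$ would contain the adjacent pair $i,j$, again a contradiction. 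I would keep these two inclusions explicitly separate, since each relies on translating between the entrywise vanishing $r_{ij} = 0$ and the set-level statement $\supp(\vect{r}_i) \in S_G$ through the fact, furnished by Lemma \ref{lem:double_stochastic_projection}, that a positive off-diagonal entry forces equality of the corresponding columns.
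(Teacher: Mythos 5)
Your proposal is correct and follows essentially the same route as the paper: both reduce the first equality to Theorem \ref{thm:assignment_projection_bijection}, both observe that the proof of Lemma \ref{lem:double_stochastic_projection} uses only symmetry, the block-inducing equations and $\coll(\matr{R})\subseteq\Delta^n$ (so every feasible $\matr{R}$ is a projection matrix with integral trace), and both settle the remaining point via the same contradiction argument showing that a support containing an edge would force a positive entry $r_{ab}$ on that edge. Your packaging of the two directions as the set identity $\mc{F}=\bigcup_k\ProM{k}{n}{S_G}$ is a slightly cleaner presentation of what the paper does as two separate inequalities, but it is not a different proof.
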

\begin{proof}
The first equation follows by definition and Theorem \ref{thm:assignment_projection_bijection}, so we will show the second. Let $\chi'(G)$ denote the optimal value of the second optimization problem and let $\mc{T}$ be a minimal colouring with $\chi(G)$ colours. Then the projection matrix $\matr{R}(\mc{T})$ corresponding to $\mc{T}$ is feasible for the second optimization problem, since $\supp(\matr{R}(\mc{T})\e[j]) \in S_G$ and $\diag(\matr{R}(\mc{T}))>\vect{0}$ imply $\e[i]^\T \matr{R}(\mc{T}) \e[j]=0$ whenever $(i,j)\in E(G)$, and so $\chi'(G)\leq \chi(G)$.

To see the other inequality, first note that $\chi'(G)\geq 0$ is integral, since whenever $\matr{R}$ is feasible, $\matr{R}\in \PiM{n}$ as shown in the proof of Lemma \ref{lem:double_stochastic_projection}, and so the eigenvalues of $\matr{R}$ are binary. 
It suffices to show that $\matr{R}\in \ProM{\tr(\matr{R})}{n}{S_G}$ then, and the only thing that is left to prove is $\supp(\vect{r}_i)\in S_G$. 

Suppose that $\supp(\vect{r}_i)\notin S_G$, so there is $(j,l)\in E(G)$ such that $(\vect{r}_i)_j, (\vect{r}_i)_l>0$. It follows from $r_{ij}\cdot (\vect{r}_i - \vect{r}_j)= \vect{0}$ that $(\vect{r}_j)_l = (\vect{r}_i)_l >0$, which contradicts $(\vect{r}_j)_l=0$.
\end{proof}

The preceding theorem is important in that it makes the constraint $\supp(\vect{r}_i)\in S_G$ tractable, so that we can approximate $\chi(G)$ by relaxing the set $\ProM{k}{n}{S_G}$. Following Section~\ref{sec:convex_hull_projection_matrices}, we can immediately state the following relaxation.

\begin{definition}
The projection $\vartheta$-number is given as
\begin{align}
\hat{\vartheta}(G):=& \min\left\{ k\in \R^n_+ \I \RroM{k}{n}{S_G} \neq \emptyset \right\}\nonumber \\
=&\min\left\{ \tr(\matr{R}) \I \matr{R}\succeq \matr{0},\, \matr{R}\e=\e,\, \la \matr{R},\matr{A}_{\overline{G}}\ra =0,\, \matr{R}\geq \matr{0}\right\}. \label{eq:1st_order_colouring}
\end{align}
\end{definition}

We should note that by non-negativity of $\matr{R}$, we have 
\begin{equation*}
\la \matr{R}, \matr{A}_G\ra =0\quad \Leftrightarrow\quad  r_{ij} =0 \quad \forall (i,j)\in E(G),
\end{equation*}
which we will prefer for notation. For the following, we explicitly recall the Szegedy number we have already seen in Section \ref{sec:colouring}, which is given as
\begin{equation}
\vartheta^+(G):=\min\left\{ x_0 \I \bpm x_0 & \e^\T \\ \e & \matr{X}\epm \succeq \matr{0}, \, \diag(\matr{X})=\e, \, \la \matr{X},\matr{A}_{\overline{G}}\ra =0,\,   \matr{X}\geq \matr{0}\right\}.\label{eq:LovaszStrengthening}
\end{equation}

It makes sense to compare the quality of both relaxations, since they have similar computational complexity and similar constraints. The following theorem shows that unfortunately, the classical approach is always at least as good as using projection matrices.

\begin{theorem}\label{thm:inferior_projection}
Let $G\in\graphs{n}$, then $\vartheta^+(G)\geq \hat{\vartheta}(G)$.
\end{theorem}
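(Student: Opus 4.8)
The two programs differ only cosmetically in their hard constraints: both ask for a symmetric, entrywise-nonnegative, positive-semidefinite matrix vanishing on $\matr{A}_{\overline{G}}$. The genuine difference is that \eqref{eq:LovaszStrengthening} normalises the diagonal ($\diag(\matr{X})=\e$) and minimises the bordered value $x_0$, whereas \eqref{eq:1st_order_colouring} normalises the row sums ($\matr{R}\e=\e$) and minimises the trace. Since both are minimisation problems, it suffices to exhibit, for every feasible pair $(x_0,\matr{X})$ of \eqref{eq:LovaszStrengthening}, a feasible $\matr{R}$ of \eqref{eq:1st_order_colouring} with $\tr(\matr{R})\le x_0$; taking the infimum over $\matr{X}$ then yields $\hat{\vartheta}(G)\le\vartheta^+(G)$. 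The plan is to obtain $\matr{R}$ from $\matr{X}$ by a symmetric positive diagonal rescaling.

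Concretely, I would seek $\matr{D}=\diag(\vect{t})$ with $\vect{t}>\vect{0}$ such that $\matr{R}:=\matr{D}\matr{X}\matr{D}$ is doubly stochastic, i.e.\ $\matr{X}\vect{t}=\vect{t}^{\circ-1}$ (entrywise inverse). This is exactly a symmetric Sinkhorn scaling of the nonnegative matrix $\matr{X}$; existence is plausible because $\diag(\matr{X})=\e>\vect{0}$ supplies a positive diagonal, with degenerate support patterns absorbed by a limiting argument using compactness of the feasible set of \eqref{eq:1st_order_colouring}. The point of this choice is that $\matr{R}$ inherits all structural constraints automatically: $\matr{R}\succeq\matr{0}$ and $\matr{R}\ge\matr{0}$ because conjugation by a positive diagonal preserves positive-semidefiniteness and signs, $\langle\matr{R},\matr{A}_{\overline{G}}\rangle=0$ because diagonal conjugation creates no new nonzero entries, and $\matr{R}\e=\e$ by construction. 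Hence $\matr{R}$ is feasible, and since $\diag(\matr{X})=\e$ its objective is $\tr(\matr{R})=\sum_i t_i^2=\|\vect{t}\|^2$.

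It then remains to prove the key inequality $\|\vect{t}\|^2\le x_0$. Here I would combine the two facts at hand: first, the Schur-complement consequence of the bordered constraint, $\matr{X}\succeq x_0^{-1}\matr{J}$ (equivalently $\e\in\rge(\matr{X})$ and $x_0\ge\e^\T\matr{X}^{+}\e$); and second, the self-consistency identity the scaling forces, $\vect{t}^\T\matr{X}\vect{t}=\vect{t}^\T\vect{t}^{\circ-1}=n$. Conjugating the first by $\matr{D}$ gives $\matr{R}\succeq x_0^{-1}\vect{t}\vect{t}^\T$, which together with the second links $x_0$, $\e^\T\vect{t}$ and $\|\vect{t}\|^2$. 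I expect this trace bound to be the main obstacle: the naive Cauchy--Schwarz estimate in the $\matr{X}$-semi-inner product only yields $(\e^\T\vect{t})^2\le n\,x_0$, which controls $\sum_i t_i$ but not $\sum_i t_i^2$, so the argument must use the range condition $\e\in\rge(\matr{X})$ (i.e.\ $\matr{X}\vect{g}=\e$ with $x_0=\vect{g}^\T\matr{X}\vect{g}$) more sharply, for instance through the variational characterisation of the scaling as the unique minimiser of $\vect{s}\mapsto\vect{s}^\T\matr{X}\vect{s}-2\sum_i\ln s_i$ over $\vect{s}>\vect{0}$. As a consistency check I would verify the construction on vertex-transitive graphs, where averaging an optimal $\matr{X}$ over $\mathrm{Aut}(\overline{G})$ produces constant row sums, the scaling collapses to a uniform $\vect{t}=c\,\e$, and the bound holds with equality, matching the companion fact that $\hat{\vartheta}(G)=\vartheta^+(G)$ in that case.
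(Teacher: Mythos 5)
Your construction coincides with the paper's: scale an optimal $\matr{X}$ by a positive diagonal $\matr{D}=\Diag(\vect{t})$ so that $\matr{R}=\matr{D}\matr{X}\matr{D}$ is doubly stochastic, observe that $\matr{R}$ is feasible for \eqref{eq:1st_order_colouring} with $\tr(\matr{R})=\|\vect{t}\|_2^2$, and reduce the theorem to $\|\vect{t}\|_2^2\le x_0$. But that last inequality, which you yourself flag as ``the main obstacle,'' is the entire content of the statement and is left unproven; the avenues you sketch do not close it. The facts you propose to combine ($\matr{R}\succeq x_0^{-1}\vect{t}\vect{t}^\T$ and $\vect{t}^\T\matr{X}\vect{t}=n$) only recover the Cauchy--Schwarz bound $(\e^\T\vect{t})^2\le n\,x_0$ you already dismissed, and the variational characterisation of the Sinkhorn scaling is a detour that is not needed.

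The missing ingredient is elementary: a symmetric nonnegative doubly stochastic matrix has largest eigenvalue $1$ (Perron--Frobenius, or Gershgorin: $|\lambda-r_{ii}|\le 1-r_{ii}$), hence $\vect{t}^\T\matr{R}\vect{t}\le\|\vect{t}\|_2^2$. Conjugating the bordered constraint by $\Diag(1,\matr{D})$ gives $\bsm x_0 & \vect{t}^\T\\ \vect{t} & \matr{R}\esm\succeq\matr{0}$, and testing against $(-1,\vect{t}^\T)^\T$ yields
\begin{equation*}
0\;\le\; x_0-2\|\vect{t}\|_2^2+\vect{t}^\T\matr{R}\vect{t}\;\le\; x_0-\|\vect{t}\|_2^2,
\end{equation*}
which is exactly the bound you need. (The paper runs this with a multiplier $\mu$ on $\vect{t}$ and optimises $2\mu-\mu^2$, landing on $\mu=1$.) A secondary, lesser gap: the existence of the positive diagonal scaling should not be left at ``plausible''; since $\matr{X}$ is symmetric, nonnegative, and has all-ones diagonal, the scaling result of Brualdi et al.\ cited in the paper applies directly and no limiting argument is required.
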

\begin{proof}
Let $x_0=\vartheta^+(G)$ and $\matr{X}$ be an optimal solution to \eqref{eq:LovaszStrengthening}. Since $\matr{X}$ is symmetrical and non-negative, it follows from \cite{Brualdi66} that there exists a diagonal matrix $\matr{D}=\Diag(\vect{d})$ with $\vect{d}>\vect{0}$ such that
\begin{equation*}
\bpm 1 & \vect{0} \\ \vect{0} & \matr{D} \epm
\bpm \vartheta^+(G) & \e^\T \\ \e & \matr{X}\epm
\bpm 1 & \vect{0} \\ \vect{0} & \matr{D} \epm
= \bpm \vartheta^+(G) & \vect{d}^\T \\ \vect{d} & \matr{DXD}\epm\succeq \matr{0},
\end{equation*}
where $\matr{R}:=\matr{DXD}$ is doubly stochastic, so $\matr{R}\e=\e$. In particular, $\matr{R}$ is feasible for \eqref{eq:1st_order_colouring}, and we have $\tr(\matr{R})=\|\vect{d}\|_2^2$. Using positive semidefiniteness, for any $\mu \in \R$, we have
\begin{equation*}
0 \leq \bpm -1 \\ \mu \vect{d} \epm^\T \bpm \vartheta^+(G) & \vect{d}^\T \\ \vect{d} & \matr{R}\epm \bpm -1 \\ \mu \vect{d} \epm = 
\vartheta^+(G) - 2\mu \|\vect{d}\|_2^2 + \mu^2 (\vect{d}^\T \matr{R} \vect{d})
\end{equation*}
which becomes
\begin{align*}
\vartheta^+(G)
&\geq 2\mu \|\vect{d}\|_2^2 - \mu^2 (\vect{d}^\T \matr{R} \vect{d}) 
    = \|\vect{d}\|_2^2 \left(2\mu - \mu^2 (\tfrac{\vect{d}^\T}{\|\vect{d}\|_2} \matr{R} \tfrac{\vect{d}}{\|\vect{d}\|_2})\right)\\
&\geq \|\vect{d}\|_2^2 \left(2\mu - \mu^2 \right) 
    = \tr(\matr{R}) \left(2\mu - \mu^2 \right).
\end{align*}
Since $\max\left\{2\mu - \mu^2 \I \mu\in \R \right\} =1$, it follows that $\matr{R}$ is a feasible solution to \eqref{eq:1st_order_colouring} that satisfies $ \vartheta^+(G)\geq \tr(\matr{R})$ and the theorem follows.
\end{proof}

To get a better understanding of the projection model, it would be beneficial to get a quantitative bound on the magnitude of the gap $\vartheta^+(G)-\hat{\vartheta}(G)$. Fortunately, we can state an explicit asymptotic lowerbound.

\begin{theorem}\label{thm:theta_asymptotic}
The worst case gap absolute $\vartheta^+(G)-\hat{\vartheta}(G)$ has asymptotic behaviour
\begin{equation*}
\max\left\{ \vartheta^+(G)-\hat{\vartheta}(G)\I G\in \graphs{n} \right\}=\Omega(n).
\end{equation*}
In particular,
\begin{equation*}
\limsup_{n\rightarrow \infty}\max\left\{\frac{ \vartheta^+(G)-\hat{\vartheta}(G)}{n}\I G\in \graphs{n} \right\} \geq \left(\frac{3}{\sqrt{2}}-2\right) \approx 0,1213.
\end{equation*}
\end{theorem}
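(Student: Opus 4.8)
The plan is to prove the lower bound by exhibiting a single explicit infinite family $\{G_n\}\subseteq\graphs{n}$ whose normalised gap $(\vartheta^+(G_n)-\hat{\vartheta}(G_n))/n$ converges to a strictly positive constant; since the $\limsup$ over all of $\graphs{n}$ dominates any particular family, this settles both the qualitative $\Omega(n)$ claim and the quantitative constant at once. Concretely I would take the complete split graphs $H_t := K_t \vee \overline{K_t}$ on $n=2t$ vertices, i.e. a clique of size $t$ completely joined to an independent set of size $t$. The heuristic behind this choice is that $\vartheta^+(H_t)$ should be large because $\overline{H_t}$ contains a large clique, whereas $\hat{\vartheta}(H_t)$ can be much smaller because the projection relaxation \eqref{eq:1st_order_colouring} is free to place very little trace on the independent block; this is exactly the asymmetry that the diagonal-scaling step in the proof of Theorem~\ref{thm:inferior_projection} leaves unpenalised.

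First I would pin down $\vartheta^+(H_t)$ exactly and essentially for free. Since $\overline{H_t} = \overline{K_t} \sqcup K_t$ is a disjoint union of perfect graphs, hence perfect, the Remark following Theorem~\ref{thm:Lovasz_sandwich} applied to $\overline{H_t}$ gives $\omega(\overline{H_t}) \le \vartheta^+(H_t) \le \chi(\overline{H_t})$, and both outer quantities equal $t$, so $\vartheta^+(H_t) = t$ with no SDP computation required. Next I would upper bound $\hat{\vartheta}(H_t)$ by producing one feasible point of \eqref{eq:1st_order_colouring}. The only non-edges of $H_t$ lying inside a block are those within the independent part $B$, so the support constraint $\langle \matr{R},\matr{A}_{\overline{H_t}}\rangle=0$ forces $r_{ij}=0$ there; I therefore take $\matr{R}$ block-constant, with a scaled all-ones block on the clique part $A$, a scaled all-ones coupling block between $A$ and $B$, and a scaled identity on $B$, reduced by the row-sum condition $\matr{R}\e=\e$ to a single free scalar. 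Exploiting the $S_t\times S_t$ symmetry, $\matr{R}$ block-diagonalises into scalar eigenvalues on the two zero-sum subspaces together with one $2\times 2$ block on the symmetric subspace, so positive semidefiniteness collapses to a single $2\times 2$ determinant inequality. Minimising $\tr(\matr{R})$ drives the free parameter to the boundary of that inequality and yields $\hat{\vartheta}(H_t)\le (t+1)/2$.

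Combining the two estimates gives a gap of at least $(t-1)/2$, hence a normalised gap tending to $\tfrac14$, which already exceeds $\tfrac{3}{\sqrt 2}-2$ and proves the theorem. To target the precise stated constant I would instead run the same computation on the unbalanced family $K_b \vee \overline{K_a}$, where the analogous construction gives $\hat{\vartheta}\le \tfrac{a^2+b}{a+b}$ and therefore a normalised gap $\tfrac{b(a-1)}{(a+b)^2}$; exactly optimising this ratio over the shape $a/b$ (possibly under a more restrictive but easier-to-certify $\matr{R}$) is what produces a closed-form extremal value such as $\tfrac{3}{\sqrt 2}-2$, since the stationarity condition at the psd boundary forces an irrational optimiser and the square root enters there.

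The main obstacle is the upper bound on $\hat{\vartheta}$: one must exhibit an $\matr{R}$ that is simultaneously doubly stochastic, entrywise nonnegative, supported off the non-edges, and positive semidefinite, and then solve the resulting one-parameter optimisation exactly. The delicate points are verifying that the $2\times 2$ symmetric block (equivalently, a single Schur complement) is the only binding semidefiniteness constraint, and locating the optimal parameter precisely on its boundary, because this boundary optimisation is where the irrational constant is generated. By contrast the lower bound on $\vartheta^+$ is effectively free once one notices that the complement is perfect, so no genuine SDP-duality argument is needed on that side; the entire difficulty is concentrated in certifying and then optimising the explicit feasible $\matr{R}$.
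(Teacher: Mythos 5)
Your argument is correct and does prove the theorem, but by a genuinely different graph family than the paper's. The paper takes $G$ to be the complement of $K_{n_1}\cup K_{n_2}$ (two disjoint cliques), parametrizes the symmetric feasible $\matr{R}$ with three scalars $\alpha,\beta,\gamma$, obtains $\hat{\vartheta}=\frac{n_1^2+n_2^2}{n_1+n_2}$ against $\vartheta^+=\max(n_1,n_2)$, and optimizes the shape to land on $\tfrac{3}{\sqrt 2}-2$. You instead use the complete split graphs $H_t=K_t\vee\overline{K_t}$, whose complement $\overline{K_t}\sqcup K_t$ is perfect with $\omega=\chi=t$, so $\vartheta^+(H_t)=t$ follows from the sandwich remark exactly as you say; and your one-parameter symmetric ansatz does work: the explicit point $\matr{R}=\bsm \frac{1}{2t}\matr{J}_t & \frac{1}{2t}\matr{J}_t\\ \frac{1}{2t}\matr{J}_t & \frac{1}{2}\matr{I}_t\esm$ is doubly stochastic, vanishes on the edges of $\overline{H_t}$ (which lie only inside the independent part, where $\matr{R}$ is diagonal), and is psd since it equals $\frac{1}{2t}\matr{J}_{2t}$ plus a psd correction supported on that block; its trace certifies $\hat{\vartheta}(H_t)\le\frac{t+1}{2}$. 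This yields a normalized gap tending to $\tfrac14$, strictly stronger than the stated constant, so the theorem follows a fortiori; your unbalanced formula $\hat{\vartheta}\le\frac{a^2+b}{a+b}$ even reproduces the paper's Table 2 values exactly. The one inaccuracy is your closing speculation that the unbalanced family $K_b\vee\overline{K_a}$ is where the constant $\tfrac{3}{\sqrt 2}-2$ comes from: your normalized gap $\frac{b(a-1)}{(a+b)^2}$ is maximized at the balanced shape with limit $\tfrac14$, and no $\sqrt2$ appears; the paper's irrational constant arises from its different two-clique family (and there from optimizing the absolute rather than the normalized gap). Since the theorem only asserts a lower bound, this misattribution does not affect the validity of your proof.
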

\begin{proof}
We will explicitly construct a graph family for which the bound on the gap holds true asymptotically. To this end, consider for any two integers $n_1, n_2 \in \N$, the graph 
\begin{equation*}
G(n_1,n_2):=K_{n_1}\cup K_{n_2},
\end{equation*}
 which we define as the union of two complete graphs with $n_1$ and $n_2$ nodes respectively. In particular, the corresponding adjacency matrix is given as
\begin{equation*}
\matr{A}_{G(n_1,n_2)}=\bpm \matr{J}_{n_1}-\matr{I}_{n_1} & \matr{0} \\ \matr{0} & \matr{J}_{n_2}-\matr{I}_{n_2}\epm,
\end{equation*}
and since these graphs are perfect, we explicitly have
\begin{equation*}
\chi(G(n_1,n_2))=\vartheta^+(\overline{G(n_1,n_2)})=\omega(G(n_1,n_2))=\max(n_1,n_2).
\end{equation*} 
For $\hat{\vartheta}(\overline{G(n_1,n_2)})$, we will construct the optimal solution $\matr{R}$ in closed form. Since this graph has multiple symmetries, we can assume $\matr{R}$ to be symmetry invariant and para\-metrize the feasible set with only three parameters $\alpha,\beta, \gamma$ by setting
\begin{equation*}
\matr{R}= \bpm \alpha \matr{I}_{n_1} & \beta \matr{J}_{n_1,n_2}\\  \beta \matr{J}_{n_2,n_1} & \gamma \matr{I}_{n_2}\epm.
\end{equation*}
Now that the constraint $\la \matr{R},\matr{A}_{G(n_1,n_2)}\ra =0$ is satisfied, the remaining affine constraints turn into
\begin{equation*}
\begin{array}{lcr}
\matr{R}\geq \matr{0} & \Leftrightarrow & \alpha, \beta,\gamma\geq 0,\\
\matr{R}\e = \e  & \Leftrightarrow & \alpha + n_2 \beta =1, \quad \gamma + n_1 \beta =1.
\end{array}
\end{equation*}
Assuming $\alpha, \gamma \neq 0$ to make the condition $\matr{R}\succeq \matr{0}$ non-trivial, we can use the Schur complement to rewrite
\begin{align*}
\matr{R}\succeq \matr{0}  \quad &\Leftrightarrow \quad  \gamma \matr{I}_{n_2} - \frac{\beta^2}{\alpha} \matr{J}_{n_2,n_1} \cdot \matr{J}_{n_1,n_2}\succeq \matr{0} \quad 
 \Leftrightarrow\quad  \gamma \matr{I}_{n_2} - \frac{\beta^2}{\alpha} n_1 \matr{J}_{n_2}\succeq \matr{0} \\
& \Leftrightarrow\quad  \gamma \geq  \frac{\beta^2}{\alpha} n_1 n_2,
\end{align*}
where we used the fact that $\matr{J}_{n}$ only has one non-zero eigenvalue given by $n$. Lastly, we can explicitly express the objective function as
\begin{equation*}
\tr(\matr{R}) =\alpha n_1 + \gamma n_2 = n_1+n_2 - 2 n_1 n_2 \beta,
\end{equation*}
using the affine constraints. Ignoring the constants, the resulting problem of computing the number $\hat{\vartheta}(\overline{G(n_1,n_2)})$ is equivalent to
\begin{equation*}
\max\left\{ \beta \I \alpha +n_2\beta =1,\quad \gamma +n_1\beta =1,\quad
\alpha \gamma \geq \beta^2 n_1 n_2,\quad \alpha,\beta, \gamma \geq 0\right\}.
\end{equation*}
Using the equations, one can show that the unique solution is 
\begin{equation*}
\bpm \alpha \\ \beta \\ \gamma\epm =\frac{1}{n_1+n_2}\bpm n_1 \\  1 \\ n_2 \epm 
\end{equation*}
and
\begin{equation*}
\hat{\vartheta}(\overline{G(n_1,n_2)}) = \frac{n_1^2 +n_2^2}{n_1+n_2}.
\end{equation*}
In particular, we now have the gap 
\begin{equation}
\Delta(n_1,n_2):=\vartheta^+(\overline{G(n_1,n_2)})-\hat{\vartheta}(\overline{G(n_1,n_2)})= \max(n_1,n_2)-\frac{n_1^2 +n_2^2}{n_1+n_2}.\label{eq:theta_gap}
\end{equation}
W.l.o.g., let $n_1=m$, $n_2=\mu m\in \N$ for some $\mu\in [0,1]$. Then $\eqref{eq:theta_gap}$ reads
\begin{equation*}
\Delta(m,\mu m) = m - \frac{(1+\mu^2) m^2}{(1+\mu) m}= \mu \left(\frac{1-\mu}{1+\mu}\right) m. 
\end{equation*}
Finally, optimizing the choice $\mu\in[0,1]$ yields the biggest theoretical gap for $\mu= \sqrt{2}-1$ and
\begin{equation*}
\max_{\mu \in [0,1]} \Delta(m,\mu m)= (3-2\sqrt{2})m.
\end{equation*}
For growing $m$, we can approximate this gap arbitrarily well by choosing $n_1=m$ and $n_2= \lceil (\sqrt{2}-1) m \rceil$ to get a graph of size $\lceil \sqrt{2}m\rceil$ with asymptotic relative gap
\begin{equation*}
\limsup_{m\rightarrow \infty} \frac{\Delta(m,\lceil (\sqrt{2}-1)m\rceil)}{\lceil \sqrt{2}m\rceil}= \frac{3-2\sqrt{2}}{\sqrt{2}}\approx 0,1213.
\end{equation*}
\end{proof}

As a corollary of the preceding theorem, the following result sheds some light on the discrepancy between the two relaxations by relating $\hat{\vartheta}(G)$ to Theorem \ref{thm:Lovasz_sandwich}.

\begin{corollary}
The inequation $\omega(G)\leq \hat{\vartheta}(\overline{G})$ does not hold in general. In particular, $\hat{\vartheta}(G)$ is not necessarily exact for perfect graphs $G$. 
\end{corollary}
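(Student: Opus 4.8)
The plan is to observe that this corollary is an immediate consequence of the explicit computation already carried out in Theorem~\ref{thm:theta_asymptotic}, so no new machinery is required; the only work is to instantiate the graph family correctly and to untangle the complementation conventions. First I would recall the two closed-form values established there for the family $G(n_1,n_2)=K_{n_1}\cup K_{n_2}$, namely $\omega(G(n_1,n_2))=\max(n_1,n_2)$ and $\hat{\vartheta}(\overline{G(n_1,n_2)})=\frac{n_1^2+n_2^2}{n_1+n_2}$. Their difference is precisely the gap $\Delta(n_1,n_2)$, which the theorem shows equals $\mu\bigl(\frac{1-\mu}{1+\mu}\bigr)m>0$ whenever $n_1=m\neq n_2=\mu m$ with $\mu\in(0,1)$.

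Hence, for any choice with $n_1\neq n_2$—for instance the minimal example $n_1=2$, $n_2=1$, where $\omega(G(2,1))=2$ while $\hat{\vartheta}(\overline{G(2,1)})=\frac{5}{3}$—one reads off $\omega(G(n_1,n_2))>\hat{\vartheta}(\overline{G(n_1,n_2)})$. This single strict inequality directly refutes the Lov\'asz-type bound $\omega(G)\leq\hat{\vartheta}(\overline{G})$, settling the first assertion. For the second assertion I would note that $G(n_1,n_2)$ is perfect (a disjoint union of complete graphs), equivalently that $\overline{G(n_1,n_2)}=K_{n_1,n_2}$ is perfect. Since $\hat{\vartheta}(\overline{G(n_1,n_2)})=\frac{n_1^2+n_2^2}{n_1+n_2}<\max(n_1,n_2)=\chi(G(n_1,n_2))=\vartheta^+(\overline{G(n_1,n_2)})$ for $n_1\neq n_2$, the relaxation $\hat{\vartheta}$ strictly undershoots the chromatic number on this perfect instance, whereas $\vartheta^+$ attains it; this is exactly the claimed non-exactness.

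The proof carries essentially no obstacle beyond bookkeeping, as everything quantitative is inherited from Theorem~\ref{thm:theta_asymptotic}; I would even present it as a one-paragraph deduction. The only point deserving care is the complementation: the relaxation $\hat{\vartheta}(\cdot)$ and the clique/chromatic numbers against which it is compared live on complementary graphs, so one must verify that the same graph $G(n_1,n_2)$ (respectively its complement $K_{n_1,n_2}$) is substituted into the correct slot of each quantity before reading off the strict inequality. Once this is checked, both statements of the corollary follow from the displayed values with no further computation.
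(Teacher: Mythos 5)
Your proposal is correct and follows exactly the route the paper intends: the corollary is stated as an immediate consequence of Theorem~\ref{thm:theta_asymptotic}, whose closed-form values $\omega(G(n_1,n_2))=\max(n_1,n_2)$ and $\hat{\vartheta}(\overline{G(n_1,n_2)})=\frac{n_1^2+n_2^2}{n_1+n_2}$ give a strict violation of $\omega(G)\leq\hat{\vartheta}(\overline{G})$ for any $n_1\neq n_2$ on a perfect graph. Your explicit instance $n_1=2$, $n_2=1$ and your care with the complementation convention are both sound, and nothing further is needed.
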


Despite this disadvantage over the classical formulation, we can show some useful properties for the class of vertex-transitive graphs. We first cite the property in question.

\begin{theorem}[{\cite{Szegedy94}}]\label{thm:vertex-transitive-theta}
For all $G\in \graphs{n}$, the inequality
\begin{equation*}
\vartheta^+(G)\cdot \vartheta^-(\overline{G})\geq n
\end{equation*}
holds, with equality if $G$ is vertex-transitive.
\end{theorem}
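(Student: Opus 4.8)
\section*{Proof proposal}

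The plan is to recast both optimal values as linear‑optimization (support‑function) values over two mutually antiblocking convex corners in $\R^n_+$ and then exploit the uniform point $\tfrac1w\e$. First I would rewrite \eqref{eq:LovaszStrengthening} (and the analogous program for $\vartheta^-$) by conic duality to obtain ``theta body'' descriptions
\[
\vartheta^+(G)=\max\{\la\e,\vect x\ra : \vect x\in\mathcal K^+(G)\},\qquad
\vartheta^-(\overline G)=\max\{\la\e,\vect x\ra : \vect x\in\mathcal K^-(\overline G)\},
\]
where $\mathcal K^+(G)$ is the Szegedy relaxation (projections of PSD $\matr X$ with $X_{00}=1$, $X_{0i}=X_{ii}=x_i$, and $X_{ij}\le 0$ on edges of $G$) and $\mathcal K^-(\overline G)$ is the Schrijver body (the same, but with $X_{ij}=0$ on edges of $\overline G$ and $\matr X\ge\matr 0$). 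Both are convex corners, i.e.\ compact down‑closed subsets of $\R^n_+$, and their objective maxima reproduce the two numbers.

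The technical heart is then to establish the antiblocking duality
\[
\operatorname{abl}\big(\mathcal K^-(\overline G)\big)=\mathcal K^+(G),\qquad
\operatorname{abl}(\mathcal K):=\{\vect y\in\R^n_+ : \la\vect y,\vect x\ra\le 1\ \forall\,\vect x\in\mathcal K\}.
\]
This is precisely where the two sign conditions enter: intersecting with the non‑negative orthant on one side ($\matr X\ge\matr 0$, giving $\vartheta^-$) is antiblocking‑dual to the Szegedy relaxation $X_{ij}\le 0$ on the complementary graph (giving $\vartheta^+$). I expect this step, rather than the inequalities themselves, to be the main obstacle: it is the non‑negative refinement of Lov\'asz's duality $\operatorname{abl}(TH(\overline G))=TH(G)$, and I would prove it following Gr\"otschel--Lov\'asz--Schrijver, carefully tracking how the edge orthogonality constraints degrade to one‑sided inequalities under polarity.

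Granting the duality, the general bound is immediate. Put $K:=\mathcal K^-(\overline G)$ and $w:=\vartheta^-(\overline G)=\max_{\vect x\in K}\la\e,\vect x\ra$. By the very definition of $w$ every $\vect x\in K$ satisfies $\la\tfrac1w\e,\vect x\ra\le 1$, so the uniform point $\tfrac1w\e$ lies in $\operatorname{abl}(K)=\mathcal K^+(G)$; evaluating the objective at this point gives $\vartheta^+(G)\ge\la\e,\tfrac1w\e\ra=\tfrac nw$, that is $\vartheta^+(G)\,\vartheta^-(\overline G)\ge n$.

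For the equality on vertex‑transitive $G$ I would symmetrize. Since $\operatorname{Aut}(G)$ permutes coordinates and preserves both the edge set of $G$ and of $\overline G$, both corners are invariant under the induced action, and so is $\e$. Averaging an optimizer $\vect x^\ast\in K$ over the transitive group keeps the averaged point in $K$ with unchanged objective and forces it to be the uniform point $\tfrac wn\e\in K=\operatorname{abl}(\mathcal K^+(G))$ (using that $\operatorname{abl}$ is an involution on convex corners). Feeding this point into the antiblocking inequality against an optimizer $\vect y^\ast\in\mathcal K^+(G)$ yields $\tfrac wn\,\vartheta^+(G)=\la\tfrac wn\e,\vect y^\ast\ra\le 1$, hence $\vartheta^+(G)\,\vartheta^-(\overline G)\le n$; combined with the general bound this gives equality.
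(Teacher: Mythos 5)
The paper does not actually prove this statement --- it is imported verbatim from Szegedy's paper as Theorem~\ref{thm:vertex-transitive-theta} and used as a black box --- so there is no internal proof to compare against; what follows is an assessment of your strategy on its own terms. Your route is the classical ``theta body'' one and it is sound: the reformulations of $\vartheta^+(G)$ and $\vartheta^-(\overline{G})$ as $\max\{\la\e,\vect{x}\ra : \vect{x}\in\mc{K}\}$ over convex corners are standard consequences of SDP duality, the sign bookkeeping in your antiblocking claim is exactly right (for certificates $\matr{Y}$ of $\vect{x}\in\mc{K}^+(G)$ and $\matr{Z}$ of $\vect{y}\in\mc{K}^-(\overline{G})$ one conjugates $\matr{Y}$ by $\Diag(1,-\e^\T)$ and uses $Y_{ij}\le 0$, $Z_{ij}\ge 0$ on $E(G)$ and $Z_{ij}=0$ on $E(\overline{G})$ to get $\la\vect{x},\vect{y}\ra\le 1$), and the symmetrization argument for the vertex-transitive case is correct, since $\operatorname{Aut}(G)=\operatorname{Aut}(\overline{G})$ acts on both corners and forces the averaged optimizer to be the uniform point.

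The one place you should be more careful is which half of the duality each half of the theorem needs. The equality statement only uses the \emph{easy} inclusion $\mc{K}^-(\overline{G})\subseteq\operatorname{abl}(\mc{K}^+(G))$, i.e.\ the pairing inequality above. But your derivation of the general bound routes the uniform point through $\operatorname{abl}(\mc{K}^-(\overline{G}))\subseteq\mc{K}^+(G)$, which is the \emph{hard} (separation/strong-duality) half of the antiblocking theorem --- precisely the step you defer. That step is true but is the bulk of the work, and it is avoidable: if $(k,\matr{Y})$ is optimal for \eqref{eq:LovaszStrengthening}, then $\tfrac{1}{n}\matr{Y}$ is feasible for the maximization form of $\vartheta^-(\overline{G})$ used in the proof of Lemma~\ref{lem:coloring_product}, and the Schur complement of the bordered matrix gives $\matr{Y}\succeq\tfrac{1}{k}\matr{J}$, hence
\begin{equation*}
\vartheta^-(\overline{G})\;\geq\;\tfrac{1}{n}\la\matr{Y},\matr{J}\ra\;\geq\;\tfrac{1}{n}\cdot\tfrac{n^2}{k}\;=\;\tfrac{n}{\vartheta^+(G)},
\end{equation*}
which is the inequality with no antiblocking machinery at all (and exactly parallels the paper's own proof of Lemma~\ref{lem:coloring_product}). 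I would therefore restructure: prove the inequality directly as above, prove only the pairing inequality $\la\vect{x},\vect{y}\ra\le1$ for the corners, and keep your symmetrization for the equality case. As written, your proposal is correct in outline but leaves its heaviest lemma unestablished while only truly needing its cheap half.
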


As shown in the following, this inequality can be sharped by using $\hat{\vartheta}(G)$.

\begin{lemma}\label{lem:coloring_product}
For all $G\in\graphs{n}$, the inequality $\hat{\vartheta}(G) \cdot \vartheta^{-}(\overline{G}) \geq n$ holds.
\end{lemma}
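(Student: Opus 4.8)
The plan is to avoid the tensor/orthonormal-representation machinery behind the classical product bound of Theorem~\ref{thm:vertex-transitive-theta} and instead prove the inequality purely by feasibility: I would convert an optimal solution of the projection program \eqref{eq:1st_order_colouring} into a feasible point of the program defining $\vartheta^-(\overline{G})$ whose objective value is exactly $n/\hat{\vartheta}(G)$.

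Concretely, let $\matr{R}$ attain the minimum in \eqref{eq:1st_order_colouring}, so that $\matr{R}\succeq\matr{0}$, $\matr{R}\geq\matr{0}$, $\matr{R}\e=\e$, $\la\matr{R},\matr{A}_{\overline{G}}\ra=0$ and $\tr(\matr{R})=\hat{\vartheta}(G)$. The doubly stochastic constraint $\matr{R}\e=\e$ is what I would lean on twice: first it yields $\la\matr{R},\matr{J}\ra=\e^\T\matr{R}\e=\e^\T\e=n$ at no cost, and second it makes $\e$ a $1$-eigenvector of the positive semidefinite matrix $\matr{R}$, so that $\tr(\matr{R})\geq 1>0$ and the rescaling below is legitimate.

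I would then set $\matr{X}:=\tr(\matr{R})^{-1}\matr{R}$ and verify that it is feasible for the program defining $\vartheta^-(\overline{G})$, namely \eqref{eq:PRIMAL} applied to $\overline{G}$ together with the extra constraint $\matr{X}\geq\matr{0}$. Positive semidefiniteness and entrywise nonnegativity are inherited from $\matr{R}$; the normalization gives $\tr(\matr{X})=\la\matr{X},\matr{I}\ra=1$; and the linear constraint $\la\matr{X},\matr{A}_{\overline{G}}\ra=0$ holds because it is literally the same constraint already satisfied by $\matr{R}$, merely rescaled. Evaluating the objective gives $\la\matr{X},\matr{J}\ra=\tr(\matr{R})^{-1}\la\matr{R},\matr{J}\ra=n/\hat{\vartheta}(G)$, and maximality of $\vartheta^-(\overline{G})$ then forces $\vartheta^-(\overline{G})\geq n/\hat{\vartheta}(G)$, which is the claim after clearing denominators.

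The only thing that needs genuine care, and hence the real content of the argument, is the observation that the feasible set of $\vartheta^-(\overline{G})$ is exactly that of \eqref{eq:1st_order_colouring} with the doubly stochastic condition $\matr{R}\e=\e$ dropped and the trace pinned to $1$; in particular both programs impose the identical support constraint $\la\,\cdot\,,\matr{A}_{\overline{G}}\ra=0$, so no re-derivation of the zero pattern is required, and $\matr{R}\e=\e$ enters only through the evaluation of $\la\matr{R},\matr{J}\ra$. I would close by remarking that this bound is genuinely stronger than what one obtains by combining $\vartheta^+(G)\geq\hat{\vartheta}(G)$ from Theorem~\ref{thm:inferior_projection} with Theorem~\ref{thm:vertex-transitive-theta}, since $\hat{\vartheta}(G)\leq\vartheta^+(G)$ makes $\hat{\vartheta}(G)\cdot\vartheta^-(\overline{G})\geq n$ the sharper of the two products.
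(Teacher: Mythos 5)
Your proposal is correct and is essentially the paper's own argument: both take an optimal $\matr{R}$ for \eqref{eq:1st_order_colouring}, rescale to $\matr{X}=\tr(\matr{R})^{-1}\matr{R}$, check feasibility for the program defining $\vartheta^-(\overline{G})$, and use $\matr{R}\e=\e$ to evaluate $\la\matr{X},\matr{J}\ra=n/\hat{\vartheta}(G)$. The additional remarks on positivity of the trace and on the sharpening of Theorem~\ref{thm:vertex-transitive-theta} are fine but not needed.
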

\begin{proof}
Let $\matr{R}$ be an optimal solution to \eqref{eq:1st_order_colouring}. Recalling the definition
\begin{equation*}
\vartheta^-(\overline{G})= \max \left\{ \la \matr{X},\matr{J}\ra \I \tr(\matr{X})=1,\,  \la \matr{X},\matr{A}_{\overline{G}}\ra =0, \, \matr{X}\succeq \matr{0}, \, \matr{X}\geq \matr{0} \right\}
\end{equation*}
as \eqref{eq:PRIMAL} with non-negativity constraints, we see that $\matr{X}:= \frac{1}{\tr(\matr{R})} \matr{R}$ is a feasible solution and as desired,
\begin{equation*}
\vartheta^-(\overline{G})\geq  \la \matr{X}, \matr{J}\ra = \frac{\la \matr{R}, \matr{J}\ra }{\tr(\matr{R})} = \frac{n}{\hat{\vartheta}(G)}.
\end{equation*}
\end{proof}

\begin{theorem}
For vertex-transitive $G\in \graphs{n}$, we have
\begin{equation*}
\vartheta^+(G) = \hat{\vartheta}(G).
\end{equation*}
In particular, $\omega(G)=\hat{\vartheta}(\overline{G})=\chi(G)$ for vertex-transitive perfect $G\in \graphs{n}$.
\end{theorem}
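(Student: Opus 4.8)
The plan is to obtain the identity by squeezing $\hat{\vartheta}(G)$ between two bounds that coincide precisely on the vertex-transitive class. One direction, $\vartheta^+(G)\geq \hat{\vartheta}(G)$, is already established in Theorem~\ref{thm:inferior_projection} and holds for every graph. For the reverse direction I would exploit the two product inequalities at our disposal: Theorem~\ref{thm:vertex-transitive-theta}, which sharpens to the equality $\vartheta^+(G)\cdot \vartheta^-(\overline{G})=n$ exactly when $G$ is vertex-transitive, and Lemma~\ref{lem:coloring_product}, which supplies the general bound $\hat{\vartheta}(G)\cdot \vartheta^-(\overline{G})\geq n$.

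Concretely, for vertex-transitive $G$ I would chain these two facts into
\[
\hat{\vartheta}(G)\cdot \vartheta^-(\overline{G})\geq n = \vartheta^+(G)\cdot \vartheta^-(\overline{G}).
\]
Dividing through by $\vartheta^-(\overline{G})$---which is legitimate since $\vartheta^-(\overline{G})\geq \omega(G)\geq 1>0$ by the sandwich chain recalled in the remark following Theorem~\ref{thm:Lovasz_sandwich}---yields $\hat{\vartheta}(G)\geq \vartheta^+(G)$. Together with Theorem~\ref{thm:inferior_projection} this forces $\vartheta^+(G)=\hat{\vartheta}(G)$, which is the first assertion.

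For the ``in particular'' statement I would first observe that the complement of a vertex-transitive graph is again vertex-transitive, as $G$ and $\overline{G}$ share the same automorphism group acting transitively on $[n]$. Hence the equality just proved applies to $\overline{G}$ as well, giving $\hat{\vartheta}(\overline{G})=\vartheta^+(\overline{G})$. If $G$ is moreover perfect, then $\omega(G)=\chi(G)$, so the entire chain $\omega(G)\leq \vartheta^-(\overline{G})\leq \vartheta(\overline{G})\leq \vartheta^+(\overline{G})\leq \chi(G)$ collapses into equalities; in particular $\vartheta^+(\overline{G})=\chi(G)=\omega(G)$, and substituting the identity for $\overline{G}$ gives $\omega(G)=\hat{\vartheta}(\overline{G})=\chi(G)$.

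Since the argument is a clean concatenation of results already in hand, I do not expect a serious obstacle. The only points deserving a moment of care are the strict positivity of $\vartheta^-(\overline{G})$ needed to justify the division and the transfer of vertex-transitivity to the complement, both of which are elementary. The genuinely load-bearing ingredient is the equality case of Theorem~\ref{thm:vertex-transitive-theta}, where vertex-transitivity is actually used; everything else is bookkeeping.
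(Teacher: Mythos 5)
Your proposal is correct and follows exactly the paper's argument: it combines Theorem~\ref{thm:inferior_projection}, Lemma~\ref{lem:coloring_product}, and the equality case of Theorem~\ref{thm:vertex-transitive-theta} into the chain $\vartheta^+(G)\geq \hat{\vartheta}(G)\geq n/\vartheta^-(\overline{G})=\vartheta^+(G)$. Your additional care about the positivity of $\vartheta^-(\overline{G})$ and the details of the ``in particular'' clause is sound but does not change the route.
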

\begin{proof}
Using Theorem \ref{thm:inferior_projection}, Lemma \ref{lem:coloring_product} and Theorem \ref{thm:vertex-transitive-theta}, we see that
\begin{equation*}
\vartheta^+(G)\geq \hat{\vartheta}(G) \geq \frac{n}{\vartheta^-(\overline{G})} = \vartheta^+(G)
\end{equation*}
whenever $G\in\graphs{n}$ is vertex-transitive.
\end{proof}

Vertex-transitive graphs are known examples for which the relaxations $\vartheta(G)$ and $\vartheta^+(G)$ perform badly, and it is surprising to see that an analogue of Theorem \ref{thm:Lovasz_sandwich} for $\hat{\vartheta}(G)$ can be recovered in this case. In particular, any advantage of $\vartheta(G)$ over $\hat{\vartheta}(G)$ seems to be related to exploiting the lacking symmetries of a given graph.

\section{Extension with the Method of Moments}\label{sec:MM_Extention}

The relaxation $\vartheta(G)$ is inherently tied to the method of moments, a technique from polynomial optimization. This method constructs a hierarchy of convex relaxations of growing size for polynomial optimization problems, which converge towards the global optimum. Applying it to \eqref{eq:STABLE} yields a formulation that can be transformed into \eqref{eq:PRIMAL} as a first step in this hierarchy.

Following this approach, we can consider $\ProM{k}{n}{S_G}$ as an algebraic variety and apply the method of moments to it as well. For our purposes, it will be enough to consider the first stage of the relaxation hierarchy; we will skip the intricacies of the general method of moments and construct the relaxation of the first stage explicitly, which is motivated by the following line of thought:

Recalling that $\matr{R}=\left( \vect{r}_1,\ldots, \vect{r}_n\right)$, we would like to work with the rank-$1$ matrices
\begin{equation}
\matr{M}(\matr{R})=\bpm 1\\ \vect{r}_1 \\ \vdots \\ \vect{r}_n \epm \bpm 1\\ \vect{r}_1 \\ \vdots \\ \vect{r}_n \epm^\T,
\end{equation}
which contains all the products of entries of $\matr{R}$, since all constraints in $\ProM{k}{n}{S_G}$ turn into linear constraint over $\matr{M}$. Since this can not be done efficiently in practice, the method of moments proceeds by using all these linear constraints and applies it to the tractable matrix variable
\begin{equation*}
\matr{M}_1(\matr{R})=
\bpm 
1          & \vect{r}_1^\T & \ldots & \vect{r}_n^\T \\
\vect{r}_1 & \matr{R}_{11} & \ldots & \matr{R}_{1n} \\
\vdots     & \vdots        & \ddots & \vdots        \\
\vect{r}_n & \matr{R}_{n1} & \ldots & \matr{R}_{nn}
\epm \succeq \matr{0}.
\end{equation*}
instead of $\matr{M}(\matr{R})$, where the matrix blocks $\matr{R}_{ij}$ are new variables that linearize the products $\vect{r_i}\vect{r_j}^\T$. The linear constraints extracted from $\ProM{k}{n}{S_G}$ can be linewise converted to the system
\begin{align*}
\matr{R}_{ij} \e[] &= \vect{r}_{i}  & \la \vect{r}_{j},\e[]\ra &= 1\\
\matr{R}_{ii} \e[j] &= \matr{R}_{ij} \e[i] = \matr{R}_{jj} \e[i] = \matr{R}_{ji} \e[j] & \matr{R}_{ii}\e[i]&=\diag(\matr{R}_{ii})\\
\la \matr{R}_{ii}, \matr{A}_{\overline{G}}\ra &= 0 \\ 
\sum_{i\in [n]} \tr(\matr{R}_{ii})&=k
\end{align*}
for all choices of $i,j\in [n]$. By invoking the block inducing equations, one can show that additionally, we can recover the original matrix $\matr{R}$ from the $\matr{R}_{ii}$ through the equation
\begin{equation*}
\sum_{i\in [n]} \matr{R}_{ii} = \matr{R}.
\end{equation*}

Since the size of $\matr{M}(\matr{R})$ prohibitive, we relax it further by discarding all matrices $\matr{R}_{ij}$ with $i\neq j$. This relaxes $\matr{M}_1(\matr{R})\succeq \matr{0}$ to the systems
\begin{equation*}
\matr{M}_1(\matr{r}_i)=
\bpm 
1          & \vect{r}_i^\T \\
\vect{r}_i & \matr{R}_{ii}
\epm \succeq \matr{0} \quad \forall i\in [n],
\end{equation*}
and since $\bpm -1 & \e[n]^\T\epm^\T$ belongs to the kernel of these matrices, we can discard the first row and column from the formulation. This leads us to the strengthening
\begin{equation*}
\RroM{k}{n}{S_G}' = \left\{ \matr{R}\in F \I \tr(\matr{R})=k,\; \matr{R}\e=\e,\; \la \matr{R}, \matr{A}_{\overline{G}}\ra =0  \right\},
\end{equation*}
where 
\begin{equation*}
F := \left\{ \matr{R} \in \R^{n\times n} \I \matr{R} = \sum_{i\in [n]} \matr{R}_{ii}, \; (\matr{R}_{ii})_{jl}=(\matr{R}_{jj})_{il}, \; \matr{R}_{ii}\in F_i\; \forall i,j,l\in [n]\right\}
\end{equation*}
is build up from the individual sets
\begin{equation*}
F_i:=\left\{\matr{X}  \in \R_+^{n\times n} \I   \la \matr{X}, \matr{J}_n \ra =1,\;  \matr{X}\e[i]=\diag(\matr{X}),\; \matr{X}\succeq \matr{0} \right\}
\end{equation*}
that are distinguished by the constraint $\matr{X}\e[i]=\diag(\matr{X})$. In particular, the symmetry condition in $F$ makes sure that we can consider each matrix $\matr{R}_{ii}$ as a slice of a symmetrical third-order tensor, as shown in Figure \ref{fig:3rd_order_tensor}.

The advantage of $\RroM{k}{n}{S_G}'$ over $\RroM{k}{n}{S_G}$ is the inclusion $F\subseteq \R^{n\times n}_+ \cap \mc{S}_+^n$.

\begin{remark}
\cite{Laurent08} already proposed the construction of a symmetrical third-order tensor to relax a moment matrix of order $1$ for the combinatorial moment matrix of the stable set problem. From this perspective, each entry of our tensor can be identified with a set $\{i,j,l\}$ via its indices and understood as a rescaling of combinatorial moments up to the third order. 

While this process can be used to construct symmetrical tensors of higher orders from appropriate combinatorial moment matrices of rank $1$, this approach does not generalize in our setting. The reason for this is the assumption $\rank(\matr{R})=k>1$, which breaks the symmetry; the equation $(\matr{R}_{ij})_{lm} = (\matr{R}_{il})_{jm}$ will generally not hold, and so the entries associated with various permutations of $\{i,j,l,m\}$ can not be assumed to be equal.
\end{remark}

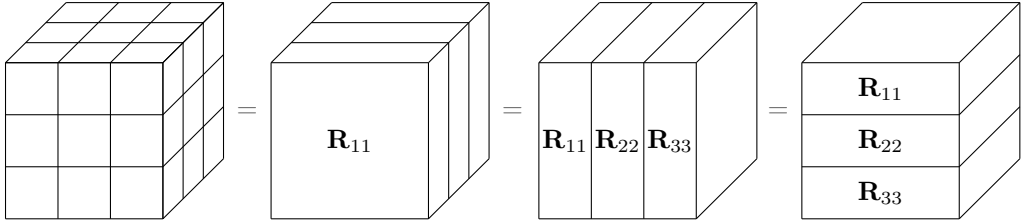
\begin{figure}[!ht]
\centering
\begin{minipage}{.2\textwidth}
\centering
\begin{tikzpicture}[scale=0.69,domain=0:3]
\foreach \x in{0,...,3}
{   \draw (0,\x ,3) -- (3,\x ,3);
    \draw (\x ,0,3) -- (\x, 3,3);
    \draw (3,\x ,3) -- (3,\x ,0);
    \draw (\x ,3,3) -- (\x ,3,0);
    \draw (3,0,\x ) -- (3,3,\x );
    \draw (0,3,\x ) -- (3,3,\x );
}
\end{tikzpicture}
\end{minipage}
=
\begin{minipage}{.2\textwidth}
\centering
\begin{tikzpicture}[scale=0.69,domain=0:3]
	\draw (1.5, 1.5, 3) node {$\matr{R}_{11}$};
	\draw (0, 0, 3) -- (3, 0, 3);
    \draw (3, 0, 3) -- (3, 0, 0);
    \draw (0, 0, 3) -- (0, 3, 3);
    \draw (0, 3, 3) -- (0, 3, 0);
    \draw (3, 3, 3) -- (3, 3, 0);    
\foreach \x in{0,...,3}
{   \draw (3,0,\x ) -- (3,3,\x );
    \draw (0,3,\x ) -- (3,3,\x );
}
\end{tikzpicture}
\end{minipage}
=
\begin{minipage}{.2\textwidth}
\centering
\begin{tikzpicture}[scale=0.69,domain=0:3]
	\draw (0.5, 1.5, 3) node {$\matr{R}_{11}$};
	\draw (1.5, 1.5, 3) node {$\matr{R}_{22}$};
	\draw (2.5, 1.5, 3) node {$\matr{R}_{33}$};		
    \draw (0, 0, 3) -- (3, 0, 3);
    \draw (3, 0, 3) -- (3, 0, 0);
    \draw (3, 0, 0) -- (3, 3, 0);
    \draw (0, 3, 0) -- (3, 3, 0);
    \draw (0, 3, 3) -- (3, 3, 3);    
\foreach \x in{0,...,3}
{   \draw (\x ,0,3) -- (\x ,3,3);
    \draw (\x ,3,3) -- (\x ,3,0);
}
\end{tikzpicture}
\end{minipage}
=
\begin{minipage}{.2\textwidth}
\centering
\begin{tikzpicture}[scale=0.69,domain=0:3]
	\draw (1.5, 2.5, 3) node {$\matr{R}_{11}$};
	\draw (1.5, 1.5, 3) node {$\matr{R}_{22}$};
	\draw (1.5, 0.5, 3) node {$\matr{R}_{33}$};		
    \draw (0, 0, 3) -- (0, 3, 3);
    \draw (0, 3, 3) -- (0, 3, 0);
    \draw (3, 0, 0) -- (3, 3, 0);
    \draw (0, 3, 0) -- (3, 3, 0);
    \draw (3, 0, 3) -- (3, 3, 3);
\foreach \x in{0,...,3}
{   \draw (0,\x ,3) -- (3,\x ,3);
    \draw (3,\x ,3) -- (3,\x ,0);
}
\end{tikzpicture}
\end{minipage}
\caption[Relaxation of projection matrix visualized as third order tensor]
{Different ways to align the variables of the various $\matr{R}_{ii}$ into the same symmetrical third order tensor, each cube representing one variable.}
\label{fig:3rd_order_tensor}
\end{figure}

We note that even by refining $\hat{\vartheta}(G)$ by using the computationally expensive $\RroM{k}{n}{S_G}'$ instead of $\RroM{k}{n}{S_G}$, we cannot guarantee a lower-bound of $\vartheta(G)$. To see this, let
\begin{equation*}
\hat{\vartheta}'(G):= \min\left\{ k\in \R^n_+ \I \RroM{k}{n}{S_G}' \neq \emptyset \right\}
\end{equation*}
be the corresponding strengthening of $\hat{\vartheta}(G)$. 

Empirical evidence shows that $\hat{\vartheta}'(G)=\chi(G)$ holds for the graph class $G(n_1,n_2)$ used as counter example in Theorem \ref{thm:theta_asymptotic}. However, this counter example can be generalized to find another class of perfect graphs for which $\vartheta(G)>\hat{\vartheta}'(G)$. Let $n_1,n_2,n_3\in \N$ such that 
\begin{equation*}
G(n_1,n_2,n_3)=K_{n_1}\cup K_{n_2}\cup K_{n_3}
\end{equation*}
is the union of three complete graphs. This class of graphs is perfect, and as such we get
\begin{equation*}
\vartheta(\overline{G(n_1,n_2,n_3)}) = \omega(G(n_1,n_2,n_3))= \max\{n_1,n_2,n_3\}.
\end{equation*}
Extending this construction, we can define another class of perfect graphs by taking the union of a complete graph on $n_1$ nodes together with $m$ isolated nodes. More formally,
\begin{equation*}
G(n_1,\e[m])=K_{n_1}\cup \left(\bigcup_{i\in [m]} K_{1}\right),
\end{equation*}
where 
\begin{equation*}
\vartheta(\overline{G(n_1,\e[m])}) = \omega(G(n_1,\e[m]))= n_1. 
\end{equation*}
Fixing the number of total nodes to $9$, the tables in Figure \ref{fig:theta_hat_prime} and \ref{fig:theta_hat_prime_trailing} show the behavior of the various relaxations. While the quality definitely increases while going from $\hat{\vartheta}(G)$ to $\hat{\vartheta}'(G)$, the difference between $\hat{\vartheta}'(G)$ and $\vartheta(G)$ grows roughly linear with the number of connected components.

We suspect that the bad performance of these relaxations is based on the fact that the relaxations are \emph{not monotone} in terms of subgraphs. In particular, if $H$ is a subgraph of $G$, then the implication
\begin{equation*}
H\leq G \quad \Rightarrow \quad \vartheta(H)\leq \vartheta(G),
\end{equation*}
is not true for either $\hat{\vartheta}(G)$ or $\hat{\vartheta}'(G)$. Since it is not easy to add this property to the functions in question, they are left at an inherent disadvantage compared to the original $\vartheta$-function. This begs the question if in general, approximating matrices with binary eigenvalues is harder than approximating matrices with binary entries.

\begin{table}[tb]
    \centering
    \begin{minipage}{.49\textwidth}
        \centering
		\begin{tabular}{|ccc|ccc|}
			\hline 
			 & & & & & \\[-10pt]
			$n_1$ & $n_2$ & $n_3$ & $\hat{\vartheta}$ & $\hat{\vartheta}'$ & $\vartheta=\chi$ \\
			\hline	
			3 & 3 & 3  &  3     & 3     & 3\\
			4 & 3 & 2  &  3.222 & 3.968 & 4\\
			4 & 4 & 1  &  3.666 & 4     & 4\\
			5 & 2 & 2  &  3.666 & 4.972 & 5\\
			5 & 3 & 1  &  3.888 & 4.983 & 5\\
			6 & 2 & 1  &  4.555 & 5.983 & 6\\
			7 & 1 & 1  &  5.666 & 6.985 & 7\\
			\hline
		\end{tabular}
		\caption{Relaxations for $G(n_1, n_2, n_3)$.}
		\label{fig:theta_hat_prime}
	\end{minipage}
    \begin{minipage}{.49\textwidth}
        \centering
		\begin{tabular}{|cc|ccc|}
			\hline
			 & & & & \\[-10pt]
			$n_1$ & $m$ & $\hat{\vartheta}$ & $\hat{\vartheta}'$ & $\vartheta=\chi$ \\
			\hline	
			2 & 7   &  1.222 & 1.772 & 2\\
			3 & 6   &  1.666 & 2.792 & 3\\
			4 & 5   &  2.333 & 3.851 & 4\\
			5 & 4   &  3.222 & 4.905 & 5\\
			6 & 3   &  4.333 & 5.951 & 6\\
			7 & 2   &  5.666 & 6.986 & 7\\
			8 & 1   &  7.222 & 8     & 8\\
			\hline
		\end{tabular}
		\caption{Relaxations for $G(n_1,\e[m])$.}
		\label{fig:theta_hat_prime_trailing}
	\end{minipage}
\end{table}

\section{Conclusion}

We introduced a new model for the chromatic number $\chi(G)$ based on what we call combinatorial projection matrices and proved its correctness. Relaxing this model yielded an SDP whose optimal value was defined as the projection theta number $\hat{\vartheta}(G)$, which was closely related to the Szegedy number $\vartheta^+(G)$, a variant of the Lov\'{a}sz theta number.
We characterized that in general, $\hat{\vartheta}(G)\leq \vartheta^+(G)$, with equality if $G$ is vertex-transitive. Additionally, we gave examples for both equality and strict inequality. 

Furthermore, we investigated whether the application of the method of moments could help in bridging this gap and showed with counter examples that in general, this is not the case. 

These results seem to imply that working with binary matrices is a better paradigm than working with binary eigenvalues in this context. This is to be expected, since matrices with binary eigenvalues can be seen as generalization of binary variables, but it is unexpected in the sense that SDP relaxations are usually adept at approximating eigenvalue-problems. 

For future work, it will be interesting to closer inspect the relation between matrices with binary entries and matrices with binary eigenvalues. It would be interesting to know whether one can use one modeling paradigm and convert it to the other, and in particular, if SDP relaxations based on binary matrix-entries are always superior to SDP relaxations based on binary eigenvalues.

\bibliographystyle{plain}
\bibliography{colouring,papers}

\end{document}